\newtheorem{thm}{Theorem}[section]
\newtheorem{prop}[thm]{Proposition}
\newtheorem{lem}[thm]{Lemma}
\newtheorem{cor}[thm]{Corollary}
\newtheorem{ex}[thm]{Example}
\newcommand{\mm}{\mathfrak m}
\newcommand{\m}{\mathfrak m}
\newcommand{\N}{\mathbb N}
\newcommand{\pd}{\mathop{\mathrm{pd}}\nolimits}
\theoremstyle{definition}
\theoremstyle{remark}
\newtheorem{rmk}[thm]{Remark}
\theoremstyle{definition}
\theoremstyle{remark}
\numberwithin{equation}{section}
\begin{document}

\title[A multiplicity bound and a criterion for the Cohen-Macaulay property]{A multiplicity bound for graded rings and a criterion for the Cohen-Macaulay property}


\author[Huneke]{Craig Huneke}
\address{Department of Mathematics, University of Virginia, Charlottesville, VA 22904.}
\email{huneke@virginia.edu}
\thanks{}

\author[Mantero]{Paolo Mantero}
\address{Department of Mathematics, University of California Riverside, Riverside, CA 92521.}
\email{mantero@math.ucr.edu\newline
\indent{\it URL:} \href{http://math.ucr.edu/~mantero/}{\tt http://math.ucr.edu/$\sim$mantero/}}

\author[McCullough]{Jason McCullough}
\address{Department of Mathematics, Rider University,  Lawrence Township, NJ 08648.}
\email{jmccullough@rider.edu}
\thanks{}

\author[Seceleanu]{Alexandra Seceleanu}
\address{Department of Mathematics, University of Nebraska at Lincoln, Lincoln, NE 68588.}
\email{aseceleanu2@math.unl.edu}
\thanks{}

\subjclass[2010]{Primary: 13C14; Secondary: 13H15, 13D40.}

\date{January 23, 2014}


\commby{}

\begin{abstract}
Let $R$ be a polynomial ring over a field. We prove an upper bound for the multiplicity of $R/I$ when $I$ is a homogeneous ideal of the form $I=J+(F)$, where $J$ is a Cohen-Macaulay ideal and $F\notin J$. The bound is given in terms of two invariants of $R/J$ and the degree of $F$.
We show that ideals achieving this upper bound have high depth, and provide a purely numerical criterion for the Cohen-Macaulay property. Applications to quasi-Gorenstein rings and almost complete intersections are given.
\end{abstract}

\maketitle

\section{introduction}
Bounds on the multiplicity of a ring $S$ in terms of other invariants of $S$ (and analyses of the rings achieving these bounds) have attracted a strong interest over the last 130 years, from the classical lower bound ${\rm deg}\,X\geq {\rm codim}\,X +1$ for non-degenerate projective varieties to the still open Eisenbud-Green-Harris Conjecture \cite{EGH}, or the Huneke-Srinivasan Multiplicity Conjecture, proved a few years ago by Eisenbud and Schreyer \cite{ES}.

In the present paper, we prove a new upper bound for the multiplicity of a wide class of graded rings and study the defining ideals achieving this bound. Let $R$ be a polynomial ring over a field $k$, $J$ a homogeneous Cohen-Macaulay ideal, $F\notin J$ a homogeneous element and $I=J+(F)$. Let $e(S)$ denote the multiplicity of a graded ring $S$. If $F$ is regular on $R/J$, it is well-known that $e(R/I)=e(R/J)\cdot{\rm deg}(F)$. If $F$ is a zero-divisor on $R/J$ (that is, ${\rm ht}\,I={\rm ht}\,J$) one has the elementary inequality $e(R/I)\leq e(R/J)-1$. In the present paper, we prove the following sharper upper bound
\begin{equation}\label{Ineq}
e(R/I)\leq e(R/J) - {\rm max}\{1,s-{\rm deg}(F) +1\},
\end{equation}
where $s=s(R/J)$ is the difference between the smallest graded shift appearing in the last step of a minimal graded free resolution of $R/J$ and ${\rm ht}\,J$. 
This bound, by its nature, is more restrictive when $\deg(F)$ is small or when $R/J$ is level, a particular instance of which is when $R/J$ is Gorenstein.

The inequality (\ref{Ineq}) generalizes a previous result of Engheta  bounding the multiplicity of a homogeneous almost complete intersection in terms of the degrees of its minimal generators \cite[Theorem~1]{En}, see Corollary \ref{En2}.

We name ideals $I$ achieving equality in (\ref{Ineq}) {\it ideals of maximal multiplicity}. Our second main result states that these ideals define factor rings with high depth. More precisely, $R/I$ is Cohen-Macaulay if the degree of $F$ is sufficiently small, namely, ${\rm deg}(F)\leq s$. In the remaining cases, that is, when ${\rm deg}(F)>s$, we show ${\rm depth}(R/I)= {\rm dim}(R/I)-1$, provided $R/J$ is Gorenstein (Theorem \ref{Main}). 
A consequence of this result is that almost complete intersections generated in a single degree and having maximal multiplicity are Cohen-Macaulay (Corollary \ref{onedeg}), a result that we employ in \cite{HMMS2}.
Also, one should contrast this result with the abundance of examples of almost complete intersection ideals $I$ (even generated in a single degree) of any other multiplicity that are not Cohen-Macaulay (see Remark \ref{4quad}). 

We then provide a sufficient condition for Cohen-Macaulay ideals to have maximal multiplicity, and exhibit examples, which include rational normal curves and $\m$-primary ideals (see Theorem \ref{char}, Examples \ref{m-prim} and \ref{aci} and Corollary \ref{matrix}). Among the applications, we use linkage to deduce a lower bound for the multiplicity of any graded quasi-Gorenstein ring $S$ in terms of its $a$-invariant and dimension (Proposition \ref{boundQ}). If this lower bound is achieved,  
then $S$ is actually Gorenstein (see Theorem \ref{Gor}). 
We also prove that ideals of maximal multiplicity are unmixed if and only if they are Cohen-Macaulay (Corollary \ref{Unm}). 

As a final application, we remark that part of the material from this paper is employed in the forthcoming paper \cite{HMMS2} where, motivated by a question of Stillman \cite{PeS}, we prove a close-to-optimal upper bound on the projective dimension of any ideal generated by 4 quadratic polynomials. One of our original motivations in the present paper was, in fact, to find a more structural  reason for the fact (proved in an earlier draft of \cite{HMMS2}) that any almost complete intersection ideal (not necessarily unmixed) of multiplicity $6$ generated by $4$ quadrics  is Cohen-Macaulay.\\
\\
The structure of the paper is the following: in Section 2, we prove the three main results, namely the upper bound (\ref{Ineq}), the high depth properties of ideals of maximal multiplicity and a sufficient condition for Cohen-Macaulay ideals to have maximal multiplicity. We employ these results to obtain a sufficient condition for almost complete intersections of quadrics to be Cohen-Macaulay and provide examples. In Section 3, we prove a lower bound for the multiplicity of quasi-Gorenstein rings, a multiplicity-based sufficient condition for quasi-Gorenstein rings to be Gorenstein, and analogies between ideals of maximal multiplicity and ideals of multiplicity one.

\section{The main results}

Throughout this paper $R$ is a polynomial ring over a field $k$ and $\m$ denotes its unique homogeneous maximal ideal. Since we may harmlessly replace $k$ by $k(X)$, by base change we may always assume $|k|=\infty$.\\
\\
Recall that  the {\it Hilbert function} of a finitely generated graded $R$-module $M$ is the numerical function given by $HF_{M}(i)={\rm dim}_kM_i$ for all $i\in \mathbb Z$. If $A$ is a graded artinian factor ring of $R$, then $e(A)=\sum_{i=0}^{\infty}HF_A(i)$, and the {\it socle} of $A$ is the $k$-vector space ${\rm Soc}(A)=0:_A\mm_A$, where $\mm_A=\m A$. A {\it socle element} is an element of ${\rm Soc}(A)$.

Let $J$ be a homogeneous Cohen-Macaulay $R$-ideal of height $g$. We consider the invariant
$$s(R/J)={\rm min}\{i\,|\, {\rm Tor}_g^R(R/J,k)_i\neq 0\}-g.$$
We remark here two additional interpretations of this invariant. First, when $|k|=\infty$, 
$s(R/J)$ is the smallest degree of a non-zero homogeneous socle element of a general artinian reduction of $R/J$
(a {\it general artinian reduction} of $T=R/J$ is a ring $T/(L_1,\ldots,L_d)$ where $L_i$ are general linear forms and $d={\rm dim}(T)$). Second, one has $s(R/J)=c(R/J)+{\rm dim}(R/J),$ where  $c(R/J)=-{\rm max}\{i\,|\, [k \otimes_{R/J} \omega_{R/J}]_i \neq 0\}$ and $\omega_{R/J}$ denotes the graded canonical module of $R/J$. 

We also recall that the {\it unmixed part} of an $R$-ideal $K$, denoted $K^{un}$, is the intersection of the primary components of $K$ of minimal height. The ideal $K$ is {\it unmixed} if $K=K^{un}$. The associativity formula 
yields $e(R/K)=e(R/K^{un})$ and the following easy remark:

\begin{rmk}\label{unm}
If $K\subseteq L$ are unmixed ideals of the same height, then $e(R/K)\geq e(R/L)$, and $e(R/K)=e(R/L)$ if and only if $K=L$.
\end{rmk}

We now prove our first main result. 

\begin{thm}\label{bound}
Let $J$ be a homogeneous Cohen-Macaulay $R$-ideal, and let $F\notin J$ be homogeneous. Set $I=J+(F)$ and assume ${\rm ht}\,I={\rm ht}\,J$. Then 
\begin{itemize}
\item[(i)]  $e(R/I)\leq e(R/J)-{\rm max}\{1, s(R/J)-{\rm deg}(F) +1\};$
\item[(ii)] if equality is achieved in $($i$)$ and ${\rm deg}(F)\leq s(R/J)$, then $R/I$ is Cohen-Macaulay.
\end{itemize}
\end{thm}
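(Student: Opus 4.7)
The plan is to combine an additivity-of-multiplicity argument (for the $-1$ part of the bound) with an Artinian reduction argument (for the sharper $-(s-\deg F+1)$ part when $\deg F\leq s$).

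From the short exact sequence
\[
0\to R/(J:F)(-\deg F)\xrightarrow{\cdot F}R/J\to R/I\to 0
\]
and additivity of multiplicity, $e(R/J) = e(R/I)+e(R/(J:F))$. The hypothesis $\mathrm{ht}\,I=\mathrm{ht}\,J$ together with $R/J$ being Cohen-Macaulay force $\dim R/(J:F)=d:=\dim R/J$ (every associated prime of $R/J$ has height $g$, hence so does $(J:F)$) and $(J:F)\supsetneq J$ because $F$ is a zero-divisor; thus $e(R/(J:F))\geq 1$, giving $e(R/I)\leq e(R/J)-1$.

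For the sharper bound when $\deg F\leq s$, I pass to the Artinian quotient $A:=R/(J+(L_1,\ldots,L_d))$ for generic linear forms $L_1,\ldots,L_d$. Genericity yields: (a) $L$ is a regular sequence on $R/J$, so $\dim_k A=e(R/J)$; (b) by the paper's second interpretation of $s$, the socle of $A$ begins exactly in degree $s$; and (c) because $\deg F\leq s$ forces $(J+(L))_{\deg F}$ to be generically a proper subspace of $R_{\deg F}$ (of codimension $h_{\deg F}\geq 1$), the locus $\{L\in R_1^d:F\in J+(L)\}$ is contained in a proper subvariety of $R_1^d$ (it excludes $L=0$ because $F\notin J$), whence the image $\bar F\in A_{\deg F}$ is nonzero. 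Since $\mathrm{ht}\,I=\mathrm{ht}\,J$ gives $\dim R/I=d$, the sequence $L$ is also a minimal reduction of the maximal ideal of $R/I$, and Serre's inequality yields
\[
e(R/I)\leq \dim_k R/(I+(L_1,\ldots,L_d))=\dim_k A/\bar F A,
\]
with equality if and only if $R/I$ is Cohen-Macaulay. I then inductively build $s-\deg F+1$ linearly independent elements of $\bar F A$: starting from $a_0=\bar F$, if $a_i\in A_{\deg F+i}\setminus\{0\}$ with $\deg F+i<s$ then $a_i\notin \mathrm{Soc}(A)=0:_A A_1$ (socle equals $0:_A A_1$ by standard-gradedness, and begins in degree $s$), so $a_iA_1\neq 0$ and I pick $m_{i+1}\in A_1$ with $a_{i+1}:=a_im_{i+1}\neq 0$; iterating up to degree $s$ gives elements in consecutive degrees $\deg F,\ldots,s$, linearly independent by degree. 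Thus $\dim_k\bar F A\geq s-\deg F+1$, giving $e(R/I)\leq e(R/J)-(s-\deg F+1)$.

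Combining both bounds proves (i). For (ii), equality in (i) with $\deg F\leq s$ forces the Artinian-reduction chain $e(R/I)\leq \dim_k A/\bar F A\leq e(R/J)-(s-\deg F+1)=e(R/I)$ to be all equalities; in particular $e(R/I)=\dim_k R/(I+(L_1,\ldots,L_d))$, so by Serre's theorem $L_1,\ldots,L_d$ is a regular sequence on $R/I$, hence $R/I$ is Cohen-Macaulay. The main obstacle is part (c), establishing $\bar F\neq 0$ for generic $L$: this uses $F\notin J$ and $\deg F\leq s$ together in a nontrivial way to show that the bad locus in $R_1^d$ is contained in a proper subvariety (without $\deg F\leq s$, the generic image of the relevant family of linear maps can be all of $R_{\deg F}$, forcing $\bar F=0$ for all valid $L$). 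The inductive lifting and the application of Serre's inequality are routine once $\bar F\neq 0$ is secured.
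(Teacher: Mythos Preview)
Your overall architecture matches the paper's: reduce to a general Artinian reduction $A$ of $R/J$, use Serre's inequality $e(R/I)\le \dim_k A/(\overline F)$ with equality iff $R/I$ is Cohen--Macaulay, and bound $\dim_k(\overline F)A$ from below by producing nonzero multiples of $\overline F$ in degrees $\delta,\delta+1,\ldots,s$. Your treatment of the $-1$ bound via the short exact sequence $0\to R/(J{:}F)(-\delta)\to R/J\to R/I\to 0$ and additivity of multiplicity is a clean variant of the paper's argument (which instead compares $J\subsetneq I^{un}$), and your inductive construction of the chain $a_0,\ldots,a_{s-\delta}$ is essentially the paper's computation phrased slightly differently.

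The genuine gap is exactly at your step~(c), showing $\overline F\neq 0$ in $A$ for a general choice of linear system of parameters $L=(L_1,\ldots,L_d)$. Your argument that ``the bad locus $\{L:F\in J+(L)\}$ excludes $L=0$ and hence is a proper subvariety'' does not go through. First, the bad locus is only constructible in $R_1^d$, not closed: as $L$ specializes, $(J+(L))_{\delta}$ can shrink, so the membership condition is not cut out by a single rank inequality. Second, and more to the point, the relevant parameter space is the open set $U$ where $L$ is a regular sequence on $R/J$; the point $L=0$ does not lie in $U$, so its exclusion says nothing about whether the bad locus contains a dense open of $U$. On $U$ the bad locus \emph{is} closed (the Hilbert function of $A$ is constant there, so $L\mapsto (J+(L))_{\delta}$ is a morphism to a Grassmannian and the bad locus is the preimage of a Schubert variety), hence it is either all of $U$ or a proper closed subset. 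Ruling out the former amounts precisely to showing that the image $\widetilde F$ of $F$ in $R/J$ does not lie in $\bigcap_{L\in U}(L)(R/J)=\mathrm{core}(\widetilde{\mathfrak m})$.

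This is the step the paper singles out as nontrivial: it invokes the theorem of Fouli--Polini--Ulrich that for a Cohen--Macaulay standard graded algebra one has $\mathrm{core}(\widetilde{\mathfrak m})\subseteq \widetilde{\mathfrak m}^{\,s+1}$. Since $\deg F=\delta\le s$, the element $\widetilde F$ cannot lie in $\widetilde{\mathfrak m}^{\,s+1}$ for degree reasons, hence $\widetilde F\notin\mathrm{core}(\widetilde{\mathfrak m})$, and therefore $\overline F\neq 0$ for general $L$. You correctly flagged~(c) as ``the main obstacle,'' but the heuristic you gave does not close it; you need either the Fouli--Polini--Ulrich containment or an independent argument producing at least one linear system of parameters $L$ on $R/J$ with $F\notin J+(L)$.
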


\begin{proof}
We prove both statements at once. Set $\delta={\rm deg}(F)$ and $s=s(R/J)$.
First, assume $1> s-\delta+1$. We need to show $e(R/I)\leq e(R/J)-1$. This follows by the equality $e(R/I)=e(R/I^{un})$, the inclusions $J\subsetneq I\subseteq I^{un}$ and Remark \ref{unm}. Hence, we may assume $1\leq s-\delta+1$, that is, $\delta\leq s$. We need to show that $e(R/I)\leq e(R/J)-(s-\delta+1)$ and, if equality is achieved, then $R/I$ is Cohen-Macaulay.

Let $A$ be a general artinian reduction of $R/J$, and denote by $^-$ images in $A$. We claim that both statements hold if $\overline{F}\neq 0$ in $A$. Indeed, in this case, we set $n={\rm max}\{t\in \N_0\,|\,\overline{F}\mm_A^t\neq 0\}<\,\infty$ and observe that  $HF_{A/(\overline{F})}(i)\leq HF_A(i)-1$ for every $\delta \leq i\leq \delta+n$. This proves $e(A/(\overline{F}))\leq e(A)-(n+1)$. Let $G$ be a homogeneous element of degree $n$ such that $\overline{F}\overline{G}\neq 0$ in $A$. By definition of $n$, we have $0\neq \overline{F}\overline{G}\in {\rm Soc}(A)$, whence $\delta + n ={\rm deg}(\overline{F}\overline{G})\geq s$. This proves $s-\delta\leq n$ and gives
$e(A/(\overline{F}))\leq e(A)-(n+1)\leq e(A)-(s-\delta +1)$. 

A well-known result of Serre \cite[Theorem~4.7.10]{BH} yields $e(A)=e(R/J)$ and $e(R/I)\leq e(A/(\overline{F}))$, hence we obtain the inequalities
$$e(R/I)\leq e(A/(\overline{F}))\leq e(A)-(s-\delta+1)= e(R/J)-(s-\delta+1).$$
If, moreover, $e(R/I)= e(R/J)-(s-\delta+1)$, then we have the equality $e(R/I)=e(A/(\overline{F}))$ and, by \cite[Theorem~4.7.10]{BH}, $R/I$ is Cohen-Macaulay.

Therefore, to finish the proof it suffices to prove $\overline{F}\neq 0$ in $A$. Set $\widetilde{R}=R/J$ and let ${}\,\widetilde{}\,{}$ denote images in $\widetilde{R}=R/J$. Since $A$ is a general artinian reduction of $\widetilde{R}$, to show $\overline{F}\neq 0$ in $A$, we need to prove that $\widetilde{F}$ is not in the intersection of all the (general) minimal reductions of $\widetilde{\mm}$, that is, $\widetilde{F}\notin {\rm core}(\widetilde{\mm})$ in $\widetilde{R}=R/J$, where ${\rm core}(\mm)$ denotes the core of $\mm$. Since $\widetilde{R}=R/J$ is Cohen-Macaulay, it follows by work of Fouli, Polini and Ulrich, \cite[Corollary~4.3.(b)]{FPU} that
${\rm core}(\widetilde{\mm})\subseteq \widetilde{\mm}^{s+1}.$
The inequality $s\geq\delta$ now yields $\widetilde{F}\notin {\rm core}(\widetilde{\mm})$ for degree reasons.
\end{proof}

As a first application we recover an upper bound (first proved by Engheta \cite{En}) for the multiplicity of $R/I$, where $I$ is any homogeneous almost complete intersection.

\begin{cor}$(${\rm Engheta,} \cite[Theorem~1]{En}$)$\label{En2}
Let $I=(f_1,\ldots,f_{g+1})$ be a homogeneous almost complete intersection of height $g$ in $R$, where $f_1,\ldots,f_{g}$ form a homogeneous regular sequence. Set $d_{i}={\rm deg}\,(f_i)$ for every $i=1,\ldots,g+1$. Then,
 $$e(R/I)\leq \prod_{i=1}^gd_i-{\rm max}\left\{1, \sum_{i=1}^g(d_i-1)-(d_{g+1}-1)  \right\}.$$
In particular, if $I$ is generated by forms of the same degree $d$ and $g>1$ then
 $$e(R/I)\leq d^g-(d-1)(g-1).$$
\end{cor}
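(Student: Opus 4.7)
The plan is to apply Theorem \ref{bound} directly, taking $J = (f_1, \ldots, f_g)$ and $F = f_{g+1}$. Since $f_1, \ldots, f_g$ is a homogeneous regular sequence, $J$ is a complete intersection, hence Cohen-Macaulay of height $g$. The assumption that $I$ is an almost complete intersection of height $g$ is precisely the condition $\text{ht}\, I = \text{ht}\, J$ with $F \notin J$ required by the theorem.

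The only bookkeeping needed is to compute the two invariants of $R/J$ that appear in (\ref{Ineq}). The multiplicity of a homogeneous complete intersection is the product of the degrees of a regular generating sequence, so $e(R/J) = \prod_{i=1}^{g} d_i$. For $s(R/J)$, I would use that the minimal graded free resolution of $R/J$ is the Koszul complex $K_\bullet(f_1, \ldots, f_g)$; its last term is the rank-one free module $R\bigl(-\sum_{i=1}^g d_i\bigr)$, so $\text{Tor}_g^R(R/J, k)$ is concentrated in degree $\sum_{i=1}^g d_i$ and therefore
\[ s(R/J) \;=\; \sum_{i=1}^g d_i - g \;=\; \sum_{i=1}^g (d_i - 1). \]

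Plugging these values into Theorem \ref{bound}(i) with $\deg(F) = d_{g+1}$ yields the first displayed inequality immediately. For the equigenerated case, setting $d_i = d$ for all $i$ reduces the quantity $\sum_{i=1}^g(d_i - 1) - (d_{g+1} - 1)$ to $(g-1)(d-1)$, which is at least $1$ once $g > 1$ and $d \geq 2$. In that range the maximum in the bound equals $(d-1)(g-1)$, giving the claimed $d^g - (d-1)(g-1)$. The case $d = 1$ is vacuous, since $g+1$ linear forms in $R$ cannot generate a height-$g$ ideal with $f_{g+1} \notin (f_1, \ldots, f_g)$. There is no real obstacle in the argument; the corollary is essentially just the numerical specialization of Theorem \ref{bound} to the case where $R/J$ is a complete intersection, the only substantive input being the identification of $s(R/J)$ from the Koszul resolution.
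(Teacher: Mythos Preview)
Your proposal is correct and follows exactly the same approach as the paper: set $J=(f_1,\ldots,f_g)$, compute $s(R/J)=\sum_{i=1}^g(d_i-1)$ from the Koszul resolution, and apply Theorem~\ref{bound}(i). You have simply written out more of the bookkeeping (the value of $e(R/J)$ and the equigenerated specialization) than the paper bothers to, but there is no difference in substance.
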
 

\begin{proof}
Since $J=(f_1,\ldots,f_g)$ is a complete intersection, we have $s(R/J)=\sum_{i=1}^g(d_i-1)$. Now, Theorem \ref{bound} applied to $I=J+(f_{g+1})$ proves the statement.
\end{proof}

Although the bound of Corollary \ref{En2} can be sharp (see Remark \ref{4quad}) G. Caviglia remarked that, if this is the case, then either $g=2$, or $g=3$ and  $d_1=d_2=d_3=2$. 
Instead, the bound of Theorem \ref{bound} is more general and, indeed, is achieved in a wider variety of situations (see, for instance, Examples \ref{m-prim} and \ref{aci} and Corollary \ref{matrix}).
\medskip

Before stating our second main result, we recall a few definitions. An ideal $I$ is called {\it almost Cohen-Macaulay} if ${\rm depth}(R/I)\geq {\rm dim}(R/I)-1$. Also, a homogeneous Cohen-Macaulay ideal $J$ of height $g$ is said to be {\it level} if there exists only one positive integer $i$ such that $[{\rm Tor}_g^R(R/J,k)]_i\neq 0$. An  ideal
$J$ is {\it Gorenstein} if $R/J$ is Gorenstein, that is, ${\rm Tor}_g^R(R/J,k)\cong k$. Clearly, every homogeneous Gorenstein ideal is level.

We will also need a few definitions and results from liaison theory. Two (homogeneous) ideals $J$ and $K$ in $R$ are {\it linked}, denoted $J\sim K$, if there exists a (homogeneous) Gorenstein ideal $G$ with $K=G:_RJ$ and $J=G:_RK$. Sometimes we say $J$ and $K$ are linked {\it by $G$}. Linkage (by complete intersections) has been studied since the nineteenth century, although its first modern treatment appeared in the ground-breaking paper by Peskine and Szpiro \cite{PS}. Properties of liaison by Gorenstein ideals were then studied in \cite{Sc3}. We refer the interested reader to \cite{M}, \cite{HU} and their references.

\begin{prop}$(${\rm Peskine-Szpiro, Schenzel} \cite{PS}, \cite{Sc3}$)$\label{PS}
Let $J$ be an unmixed ideal of $R$ of height $g$. If $G\subseteq J$ is a height $g$ Gorenstein ideal, and $K=G:J$, then $J\sim K$.
\end{prop}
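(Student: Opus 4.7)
The proposition asks to show $J\sim K$, i.e., since $K=G:J$ is part of the hypothesis, it suffices to establish the reciprocal equality $J=G:K$.

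My plan is as follows. The inclusion $J\subseteq G:K$ is immediate by definition: any $x\in J$ satisfies $xy\in G$ for every $y\in K=G:J$. The real work is the reverse inclusion $G:K\subseteq J$, and I would prove it by localizing at the associated primes of $R/J$ and exploiting the Matlis/duality properties of Artinian Gorenstein local rings.

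More precisely, since $J$ is unmixed of height $g$, we have $J=\bigcap_{P}JR_{P}\cap R$, where $P$ runs over the (finitely many) minimal primes of $J$. It therefore suffices to show that $(G:K)_{P}\subseteq J_{P}$ for each such $P$. Here the height hypothesis becomes crucial: $G\subseteq J\subseteq P$ forces $P$ to contain some minimal prime of $G$, and since $G$ is Gorenstein (hence Cohen--Macaulay) of height $g$, every minimal prime of $G$ also has height $g$. As $\mathrm{ht}\,P=g$, we conclude that $P$ is itself a minimal prime of $G$, so $R_{P}/G_{P}$ is an Artinian Gorenstein local ring. Since localization commutes with colon for finitely generated ideals, the assertion $(G:K)_{P}=J_{P}$ translates, modulo $G_{P}$, to the identity
\[
0:_{R_{P}/G_{P}}\bigl(0:_{R_{P}/G_{P}}(J_{P}/G_{P})\bigr)=J_{P}/G_{P}.
\]
This is the standard involutivity of the socle-annihilator in an Artinian Gorenstein local ring, and follows from Matlis duality (the ring is its own injective hull up to a twist).

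The main obstacle, in the sense of the step requiring the most care, is the height-counting argument that upgrades an associated prime of $J$ to a minimal prime of $G$, since without this one cannot invoke the Artinian Gorenstein involution locally. Everything else is either bookkeeping with colon ideals or a straight application of unmixedness to reassemble the local conclusions into the global statement $G:K=J$, which combined with the given equality $K=G:J$ yields the linkage $J\sim K$ by the very definition given in the excerpt.
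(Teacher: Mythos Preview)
The paper does not supply a proof of this proposition; it is quoted as a classical result of Peskine--Szpiro and Schenzel, with references to \cite{PS} and \cite{Sc3}. So there is no ``paper's own proof'' to compare against.

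That said, your argument is correct and is essentially the standard one. The key points---that every associated prime $P$ of the unmixed ideal $J$ is a minimal prime of $G$ (by the height comparison), that $R_P/G_P$ is therefore zero-dimensional Gorenstein, and that the double annihilator identity $0:\bigl(0:\overline{J_P}\bigr)=\overline{J_P}$ holds in such a ring---are exactly what drives the classical proof. The reduction to localizations via $J=\bigcap_{P\in\mathrm{Ass}(R/J)}(JR_P\cap R)$ is legitimate precisely because $J$ is unmixed, and colon commutes with localization since everything in sight is finitely generated over a Noetherian ring. Your identification of the height-counting step as the one requiring care is apt: without unmixedness of $J$ the conclusion can fail, and without the height hypothesis on $G$ one would not know that $R_P/G_P$ is Artinian.
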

We will use the following simple fact: If $G$ is a Gorenstein ideal contained in an ideal $J$ and ${\rm ht}\,G={\rm ht}\,J$, then $G:J=G:J^{un}$, that is, $G:J$ is linked to $J^{un}$. 

\begin{prop}$(${\rm Peskine-Szpiro, Golod, Schenzel} \cite{PS}, \cite{Go}, \cite{Sc3}$)$\label{linkage}
If $J\sim K$, then
\begin{itemize}
\item[$($a$)$] $S/J$ is Cohen-Macaulay if and only if $S/K$ is Cohen-Macaulay$;$
\item[$($b$)$] $e(S/J)+e(S/K)=e(S/G)$, where $G$ is the ideal defining the link $J\sim K$.
\end{itemize}
\end{prop}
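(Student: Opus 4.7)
My plan rests on the central identification $K/G \cong \omega_{R/J}$ (and symmetrically $J/G \cong \omega_{R/K}$) of graded modules up to appropriate degree shifts. I would derive this by combining $K = G :_R J$ with the fact that $R/G$ is Cohen-Macaulay of dimension $d = \dim R - g$ (since $G$ is Gorenstein of height $g$, and $J, K$ are unmixed of this same height). This gives $K/G = \mathrm{ann}_{R/G}(R/J) \cong \mathrm{Hom}_{R/G}(R/J, R/G)$, and the latter is the graded canonical module of $R/J$ up to a twist by the $a$-invariant of the Gorenstein ring $R/G$. The resulting short exact sequence of graded $R$-modules
\[
0 \to K/G \to R/G \to R/K \to 0
\]
then becomes the main workhorse for both parts.

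For part (b), I would apply additivity of multiplicity to this sequence, yielding $e(R/G) = e(K/G) + e(R/K)$ since all three modules have dimension $d$. Since a finitely generated graded module and its graded canonical module share dimension and leading Hilbert coefficient, $e(K/G) = e(\omega_{R/J}) = e(R/J)$, which produces the desired formula.

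For part (a), I would apply $H^i_\mm(-)$ to the short exact sequence. Using $H^i_\mm(R/G) = 0$ for $i \neq d$, the long exact sequence yields $H^{i-1}_\mm(R/K) \cong H^i_\mm(K/G)$ for $i \leq d-1$, together with the exact tail
\[
0 \to H^{d-1}_\mm(R/K) \to H^d_\mm(K/G) \to H^d_\mm(R/G) \to H^d_\mm(R/K) \to 0.
\]
If $R/J$ is Cohen-Macaulay, then $K/G \cong \omega_{R/J}$ is Cohen-Macaulay of dimension $d$, so $H^i_\mm(K/G) = 0$ for $i < d$ and hence $H^j_\mm(R/K) = 0$ for $j < d-1$. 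For $j = d-1$, I would observe that the map $H^d_\mm(K/G) \to H^d_\mm(R/G)$ is Matlis dual to the canonical surjection $R/G \twoheadrightarrow R/J$ (using the identification of top local cohomology with the Matlis dual of the canonical module for modules that are Cohen-Macaulay of dimension $d$), hence injective, forcing $H^{d-1}_\mm(R/K) = 0$. The symmetric roles of $J$ and $K$ in a link yield the converse for free.

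The main obstacle I anticipate is the handling of the endpoint $i = d$ in the local-cohomology argument for (a): the depth lemma applied to the short exact sequence alone gives only that $R/K$ is almost Cohen-Macaulay when $R/J$ is Cohen-Macaulay, so the Matlis-duality argument producing injectivity of $H^d_\mm(K/G) \to H^d_\mm(R/G)$ is the essential input. The identification of canonical modules with the correct graded shifts is the other delicate bookkeeping point, but it is standard Gorenstein duality.
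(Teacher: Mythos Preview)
The paper does not supply a proof of this proposition: it is quoted as a known result of Peskine--Szpiro, Golod, and Schenzel and used as a black box throughout. There is therefore nothing in the paper to compare your argument against.

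That said, your proposal is correct and is essentially the classical argument found in the cited sources. The identification $K/G \cong \omega_{R/J}$ (up to a shift by $a(R/G)$) via $\mathrm{Hom}_{R/G}(R/J,R/G)$, the additivity of multiplicity along $0\to K/G\to R/G\to R/K\to 0$ together with $e(\omega_{R/J})=e(R/J)$ for part~(b), and the local-cohomology long exact sequence for part~(a) are exactly the standard ingredients. The one delicate step you flag---injectivity of $H^d_\mm(K/G)\to H^d_\mm(R/G)$---is indeed the crux, and your Matlis-duality justification is valid; it is perhaps cleanest to phrase it as applying $\mathrm{Hom}_{R/G}(-,R/G)$ to the short exact sequence and invoking local duality over the Gorenstein ring $R/G$, which turns the map in question into the canonical surjection $R/G\twoheadrightarrow R/J$.
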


The following linkage result is well-known (its proof follows, for instance, along the same lines of the proof of \cite[Theorem~3]{En2}).

\begin{lem}\label{ses}
Let $I=J+(F)$, where $J$ is Gorenstein ideal with ${\rm ht}(J)={\rm ht}(I)$, and $F\notin J$. If $L$ is any ideal linked to $I^{un}$, then ${\rm pd}(R/I)\leq {\rm pd}(R/L)+1.$
\end{lem}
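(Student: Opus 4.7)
My plan is to bound $\mathrm{pd}(R/I)$ via the short exact sequence coming from multiplication by $F$. I will first establish the bound for the canonical link $L_0:=J:I$, and then argue that it is insensitive to the choice of link via the invariance of intermediate local cohomology under even liaison.

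Writing $\delta=\deg F$, I would observe that the degree-shifted multiplication map $\cdot F\colon (R/J)(-\delta)\to R/J$ has kernel $(J:F)/J$, producing the short exact sequence
\[
0 \longrightarrow \bigl(R/(J\!:\!F)\bigr)(-\delta) \xrightarrow{\,\cdot F\,} R/J \longrightarrow R/I \longrightarrow 0.
\]
A one-line colon calculation gives $J:F = J:(J+(F)) = J:I$, and the fact recorded just after Proposition~\ref{PS} (applicable since $\mathrm{ht}\,J=\mathrm{ht}\,I$) gives $J:I = J:I^{un}$. Thus $L_0 := J:F$ is linked to $I^{un}$ via the Gorenstein ideal $J$. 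Applying the long exact sequence of $\mathrm{Tor}^R(-,k)$ to the display then yields
\[
\mathrm{pd}(R/I)\le\max\{\mathrm{pd}(R/J),\,\mathrm{pd}(R/L_0)+1\}.
\]
Since $J$ is Gorenstein of height $g$, $\mathrm{pd}(R/J)=g$; linkage preserves height, so $\mathrm{ht}(L_0)=g$ and $\mathrm{pd}(R/L_0)\ge g$ by Auslander--Buchsbaum. The maximum is therefore $\mathrm{pd}(R/L_0)+1$, which gives the desired bound for the specific link $L_0$.

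To handle an arbitrary link $L$, my plan is to use that $L$ is directly linked to $I^{un}$, hence doubly (evenly) linked to $L_0$ through $I^{un}$. Under direct linkage of unmixed ideals, the intermediate local cohomology modules $H^i_\mm(R/-)$ for $i<\dim(R/I^{un})$ are exchanged by Matlis duality up to twist (Peskine--Szpiro, Schenzel); applying this twice will show $H^i_\mm(R/L)\cong H^i_\mm(R/L_0)$ up to twist for each such $i$. This forces $\mathrm{depth}(R/L)=\mathrm{depth}(R/L_0)$, so $\mathrm{pd}(R/L)=\mathrm{pd}(R/L_0)$ by Auslander--Buchsbaum, and combined with the bound above this will yield $\mathrm{pd}(R/I)\le\mathrm{pd}(R/L)+1$ for every link of $I^{un}$.

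The main obstacle I anticipate is this last step: the invariance of $\mathrm{pd}$ under even liaison is standard, but one must check that the ideals involved are indeed unmixed (which they are, as direct links of $I^{un}$) and track the internal grading conventions carefully. The argument runs parallel to the one in \cite[Theorem~3]{En2} for the complete intersection case, with $J$ Gorenstein in place of a complete intersection.
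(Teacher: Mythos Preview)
Your argument is correct and matches the paper's approach: the paper does not spell out a proof but points to \cite[Theorem~3]{En2}, whose core is exactly the short exact sequence $0\to (R/(J{:}F))(-\delta)\to R/J\to R/I\to 0$ you use, together with $J{:}F=J{:}I=J{:}I^{un}$ and $\mathrm{pd}(R/J)=g\le \mathrm{pd}(R/L_0)$. Your additional step---passing from the canonical link $L_0=J{:}I$ to an arbitrary link $L$ via the even-liaison invariance of the deficiency modules (hence of depth and $\mathrm{pd}$)---is a valid way to obtain the full ``any $L$'' statement; note, however, that in every application in the paper one takes $L=J{:}I=L_0$, so the even-liaison argument, while correct, is not actually needed downstream.
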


In the setting of Theorem \ref{bound}, we say that $I$ has {\it maximal multiplicity} if there exist a Cohen-Macaulay ideal $J$ and an element $F\notin J$ with $I=J+(F)$, ${\rm ht}(I)={\rm ht}(J)$ and $e(R/I)=e(R/J)-{\rm max}\left\{1,s(R/J)-{\rm deg}(F)+1\right\}$. If $J$ and $F$ are as above, we say they form a {\it maximal decomposition} of $I$.

We can now state our second main result, proving the high depth of $R/I$.
\begin{thm}\label{Main}
Let $I$ be a homogeneous $R$-ideal of maximal multiplicity, and let $I=J+(F)$ be a maximal decomposition of $I$.
 \begin{itemize}
 \item[(a)] If ${\rm deg}(F)\leq s(R/J)$, then $R/I$ is Cohen-Macaulay.
 \item[(b)] If $R/J$ is level, then $R/I$ is Cohen-Macaulay if and only if ${\rm deg}(F)\leq s(R/J)$.
 \item[(c)] If ${\rm deg}(F)> s(R/J)$ and $R/J$ is Gorenstein, then ${\rm depth}(R/I)={\rm dim}(R/I)-1$. 
 \end{itemize}
\end{thm}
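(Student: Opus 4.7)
Part (a) is immediate from Theorem \ref{bound}(ii) applied to the given maximal decomposition. The substantive work is in (b) and (c).

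For part (b), the direction $(\Leftarrow)$ is (a). For the converse, the plan is to combine maximal multiplicity with Serre's theorem to produce a socle element of forbidden degree. Setting $\delta={\rm deg}(F)$, $s=s(R/J)$, letting $A$ denote a general artinian reduction of $R/J$ and $\bar F$ the image of $F$ in $A$, suppose for contradiction that $R/I$ is Cohen-Macaulay and $\delta>s$. Serre's theorem \cite[Theorem~4.7.10]{BH} then gives $e(R/I)=e(A/(\bar F))$, while maximal multiplicity (with the max equal to $1$ because $\delta>s$) gives $e(R/I)=e(R/J)-1=e(A)-1$. Hence the graded ideal $(\bar F)\subseteq A$ is one-dimensional over $k$, which forces $\bar F$ to be a nonzero socle element of $A$ of degree $\delta$. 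Since $R/J$ level implies $A$ level with socle concentrated in degree $s$, this gives $\delta=s$, contradicting $\delta>s$.

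For part (c), (b) already shows $R/I$ is not Cohen-Macaulay (since Gorenstein implies level), so it suffices to prove $\pd(R/I)\leq {\rm ht}(I)+1$. The plan is to link $I^{un}$ to an ideal of multiplicity $1$, which will automatically be Cohen-Macaulay. Set $g={\rm ht}(I)={\rm ht}(J)$ and $L=J:I^{un}$. Since $J$ is Gorenstein of height $g$ and is contained in the unmixed height-$g$ ideal $I^{un}$, Proposition \ref{PS} provides the link $L\sim I^{un}$ via $J$, and Proposition \ref{linkage}(b) yields
\[
e(R/L)=e(R/J)-e(R/I^{un})=e(R/J)-e(R/I)=1,
\]
the last equality coming from maximal multiplicity in the regime $\delta>s$. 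The linked ideal $L$ is unmixed of height $g$, and the associativity formula will force $L$ to be a height-$g$ prime. Over an infinite field, any homogeneous prime with $e(R/L)=1$ is generated by linear forms, so $R/L$ is a polynomial ring, Cohen-Macaulay with $\pd(R/L)=g$. Lemma \ref{ses} then yields $\pd(R/I)\leq g+1$, i.e., ${\rm depth}(R/I)\geq {\rm dim}(R/I)-1$; combined with the failure of the Cohen-Macaulay property, equality must hold.

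The main obstacle I foresee is the step in (c) identifying the linked ideal $L$ as a prime generated by linear forms; the socle argument in (b) is the other delicate piece, but becomes direct once one recognizes that a multiplicity drop of exactly $1$ corresponds to $(\bar F)\subseteq A$ being one-dimensional over $k$.
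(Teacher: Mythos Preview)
Your proof is correct and follows the same strategy as the paper: (a) via Theorem~\ref{bound}(ii); (b) via Serre's theorem applied to a general artinian reduction together with levelness; (c) by linking $I^{un}$ through the Gorenstein ideal $J$ to an unmixed ideal $L$ of multiplicity $1$, which is Cohen--Macaulay, and then applying Lemma~\ref{ses} and Auslander--Buchsbaum. The only minor differences are that in (b) the paper argues directly that levelness with $\delta>s$ forces $\bar F=0$ (hence $e(R/I)=e(R/J)$, contradicting $F\notin J$ via Remark~\ref{unm}) without invoking the maximal-multiplicity equality, and in (c) the paper simply cites Samuel's theorem (Proposition~\ref{almostCM}(a)) rather than rederiving the $e=1\Rightarrow$ linear-prime step via the associativity formula.
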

Note that, although $J$ is Cohen-Macaulay, the ideal $I$ may not even be unmixed.
\begin{proof} Set $g={\rm ht}\,J={\rm ht}\,I$.
Assertion (a) was proved in Theorem \ref{bound} (ii). To prove assertion (b) we need to show that if $R/I$ is Cohen-Macaulay, then ${\rm deg}(F)\leq s(R/J)$. Let $L_1,\ldots,L_d$ be general linear forms, where $d={\rm dim}(R/J)={\rm dim}(R/I)$, and let $^-$ denote images modulo $L_1,\ldots,L_d$. Since $R/I$ is Cohen-Macaulay, by Theorem \cite[Theorem~4.7.10]{BH}, we have $e(R/I)=e(\overline{R}/\overline{I})$. Moreover, since $R/J$ is level, we have ${\rm Soc}(\overline{R}/\overline{J})=\overline{\mm_{R/J}}^s$, where $s=s(R/J)$. Now, assume by contradiction ${\rm deg}(F)> s(R/J)$. Then $\overline{F}\in \overline{\mm_{R/J}}^{s+1}=0$ in $\overline{R}/\overline{J}$, that is, $\overline{R}/\overline{I}=\overline{R}/\overline{J}$. This implies 
$$e(R/I)=e(\overline{R}/\overline{I})=e(\overline{R}/\overline{J})=e(R/J).$$
Since both $I$ and $J$ are unmixed, Remark \ref{unm} implies $J=I$, which is a contradiction.

We now prove assertion (c). The assumptions imply $e(R/I)=e(R/J)-1$. Let $L=J:I$ and note that, by Lemma \ref{ses}, ${\rm pd}(R/I)\leq {\rm pd}(R/L)+1$. Moreover, $R/L$ is unmixed with $e(R/L)=e(R/J)-e(R/I)=1$ (by Proposition \ref{linkage}), therefore, by a well-known result of Samuel (see Proposition \ref{almostCM}), $R/L$ is Cohen-Macaulay. Then ${\rm pd}(R/I)\leq g+1$. Hence, by the Auslander-Buchsbaum formula, we have ${\rm depth}(R/I)\geq {\rm dim}(R/I)-1$. Finally, by assertion (b), $R/I$ is not Cohen-Macaulay, because ${\rm deg}(F)> s(R/J)$. This yields ${\rm depth}(R/I)={\rm dim}(R/I)-1$.
\end{proof}

The next simple example shows that the bound given in Theorem \ref{bound} can be (trivially) sharp, and there are ideals of maximal multiplicity that are not Cohen-Macaulay. 

\begin{ex}\label{nonCM}
Let $R=k[x,y,z]$, $J=(x^2,xy,y^2)$ and $F=xz$. Then $I=J+(F)$ is not unmixed, $R/I$ has maximal multiplicity and is almost Cohen-Macaulay.
\end{ex}

We now apply Theorem \ref{Main} to almost complete intersection ideals generated by quadrics. 
\begin{cor}\label{onedeg}
Let $I$ be an almost complete intersection of height $g\geq 1$ generated by homogeneous elements of the same degree $2$. 
If $e(R/I)=2^g-(g-1)$, then $R/I$ has maximal multiplicity and is Cohen-Macaulay.
\end{cor}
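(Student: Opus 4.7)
My plan is to reduce the statement to Theorem \ref{Main}(a) by exhibiting an explicit maximal decomposition $I = J + (F)$ and then checking that the bound from Theorem \ref{bound}(i) is attained.

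The first step is to produce a complete intersection $J \subseteq I$ generated by $g$ quadrics, together with a quadric $F \in I \setminus J$. Write $I = (f_1,\ldots,f_{g+1})$ minimally, with $\deg f_i = 2$. Since $|k| = \infty$, a general $k$-linear change of basis on the generators makes any $g$ of them a regular sequence (standard prime avoidance / general-position argument), so after relabeling $J := (f_1,\ldots,f_g)$ is a complete intersection of height $g$. Because $I$ requires $g+1$ minimal generators, $J \subsetneq I$, so $F := f_{g+1} \notin J$ and $I = J + (F)$ with $\operatorname{ht} J = \operatorname{ht} I = g$.

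Next I would compute the invariants appearing in Theorem \ref{bound}: $e(R/J) = 2^g$ by Bézout, and $s(R/J) = \sum_{i=1}^g (2-1) = g$ since the socle of a general artinian reduction of a complete intersection of $g$ quadrics lives in degree $g$ (equivalently, this is the computation used in the proof of Corollary \ref{En2}). Theorem \ref{bound}(i) then reads
\[
e(R/I) \;\leq\; 2^g - \max\{1,\, g - 2 + 1\} \;=\; 2^g - \max\{1, g-1\}.
\]
For $g = 1$ this gives $e(R/I) \leq 1$, while the hypothesis requires $e(R/I) = 2^1 - 0 = 2$; hence the case $g=1$ is vacuous. For $g \geq 2$ the right-hand side equals exactly $2^g - (g-1)$, which by assumption is $e(R/I)$. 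Thus equality holds, which means $I$ has maximal multiplicity with maximal decomposition $I = J + (F)$.

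Finally, since $\deg(F) = 2 \leq g = s(R/J)$ whenever $g \geq 2$, Theorem \ref{Main}(a) immediately gives that $R/I$ is Cohen-Macaulay, completing the proof. The only mildly delicate point is step one, namely extracting a complete intersection subideal from the ACI data, but this is routine given the same-degree hypothesis and $|k| = \infty$; everything else is a direct invocation of the results already proved in this section.
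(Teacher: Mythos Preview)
Your proposal is correct and follows essentially the same route as the paper: the paper's one-line proof simply invokes Corollary~\ref{En2} (which is precisely your construction of the complete-intersection subideal $J$ and the computation $e(R/J)=2^g$, $s(R/J)=g$) together with Theorem~\ref{Main}, and you have faithfully unpacked those citations. The only cosmetic difference is that the paper cites Theorem~\ref{Main}(b) while you cite part~(a); since $J$ is a complete intersection and hence $R/J$ is level, (b) applies, but only the ``if'' direction is needed, so your use of (a) is in fact the cleaner choice.
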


\begin{proof}
It follows by Corollary \ref{En2} together with Theorem \ref{Main} (b).
\end{proof}

If $I$ is an almost complete intersection generated by $4$ quadrics (that is, $g=3$ and $d=2$), then  Corollary \ref{En2} gives $e(R/I)\leq 6$, and Corollary \ref{onedeg} yields that $R/I$ is Cohen-Macaulay if $e(R/I)=6$. This fact is employed in \cite{HMMS2}. 

The following remark shows that, without further assumptions, no other values of $e(R/I)$ guarantee the Cohen-Macaulayness of $R/I$.
\begin{rmk}\label{4quad}
Let $I$ be a height three ideal generated by four quadrics. 
\begin{itemize}
\item[(a)] If $e(R/I)=6$, then $R/I$ is Cohen-Macaulay$;$
\item[(b)] For every $1\leq e \leq 6$, there are examples of $I$ with $e(R/I)=e$ and, if $e\neq 6$, $R/I$ is not Cohen-Macaulay.
\end{itemize} 
\end{rmk}
Assertion (a) follows by Corollary \ref{onedeg}. Assertion (b) can be seen, for instance, as follows. Take $R=k[a,b,c,x,y,z]$, 
\begin{enumerate}
\item if $I=(ax,by,cz,x^2+y^2+z^2)$, then $e(R/I)=1$ and ${\rm pd}(R/I)=4$;
\item if $I=(ax,by,xy+xz+yz,x^2+y^2+z^2)$, then $e(R/I)=2$ and ${\rm pd}(R/I)=4$;
\item if $I=(ax+by+cz,x^2,y^2,z^2)$, then $e(R/I)=3$ and ${\rm pd}(R/I)=6$;
\item if $I=(ax,x^2,y^2,z^2)$, then $e(R/I)=4$ and ${\rm pd}(R/I)=4$;
\item if $I=(ax+by+cz,bx+cy+az,cx+ay+bz,bx+cy-bz-cz)$, then $e(R/I)=5$ and ${\rm pd}(R/I)=4$;
\item if $I=(x^2,y^2,z^2,xy)$, then $e(R/I)=6$ and, by part (a), $\pd(R/I)={\rm ht}\,I=3$.
\end{enumerate}
\medskip
 
Next, we want to provide a sufficient condition for Cohen-Macaulay ideals to have maximal multiplicity. The first step consists in describing the structure of the colon ideal $J:F$.
\begin{lem}\label{ci}
Let $I$ be a Cohen-Macaulay homogeneous ideal of height $g$ having maximal multiplicity. If $J$ and $F$ form a maximal decomposition of $I$, then $J:F=(x_1,\ldots,x_{g-1},q)$, for some linearly independent linear forms $x_1,\ldots,x_{g-1}$ and an element $q\notin (x_1,\ldots,x_{g-1})$.
\end{lem}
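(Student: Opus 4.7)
The plan is to analyze $J:F$ through the short exact sequence
$$0 \to R/(J:F)(-\delta) \xrightarrow{\cdot F} R/J \to R/I \to 0,$$
where $\delta = \deg(F)$. First I would apply the depth lemma to this sequence, using that $R/J$ and $R/I$ are both Cohen-Macaulay of dimension $d = \dim R/J$, to conclude that $R/(J:F)$ is Cohen-Macaulay of dimension $d$. Writing $s = s(R/J)$ and taking multiplicities, the maximal multiplicity hypothesis then yields
$$e(R/(J:F)) = e(R/J) - e(R/I) = \max\{1,\, s - \delta + 1\}.$$

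Next I would pass to a generic artinian reduction by picking linear forms $L_1, \ldots, L_d$ simultaneously regular on all three Cohen-Macaulay modules in the sequence. Writing $A$ for the resulting artinian reduction of $R/J$ and $B$ for that of $R/(J:F)$, the sequence descends to
$$0 \to B(-\delta) \xrightarrow{\cdot\overline{F}} A \to A/\overline{F}A \to 0,$$
so $B$ is a standard graded cyclic quotient of $A$ of length $\max\{1, s - \delta + 1\}$, isomorphic (up to shift) to the ideal $\overline{F}A \subset A$.

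The key step, which I expect to be the main obstacle, is to pin down the Hilbert function of $B$ exactly. Let $t_B$ denote the top nonvanishing degree of $B$. Choosing $a \in A$ of degree $t_B$ with $\overline{F}a \neq 0$, the vanishing $\overline{F} \cdot \mathfrak{m}_A^{t_B+1} = 0$ makes $\overline{F}a$ a nonzero socle element of $A$ in degree $\delta + t_B$; the defining property of $s$ as the smallest socle degree of $A$ then forces $t_B \geq s - \delta$. Since $B$ is a standard graded $k$-algebra with $B_0 = k$, its Hilbert function is strictly positive in every degree $0 \leq i \leq t_B$, and hence $e(B) \geq t_B + 1 \geq \max\{1, s - \delta + 1\}$. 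Comparing with the equality $e(B) = \max\{1, s - \delta + 1\}$ established earlier forces $\dim_k B_i = 1$ for every $0 \leq i \leq t_B$; in particular $B$ has embedding dimension at most one.

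Finally, because generic artinian reduction decreases embedding dimension by exactly $d$, the ring $R/(J:F)$ has embedding dimension at most $d + 1$, so $J:F$ contains $g - 1$ linearly independent linear forms $x_1, \ldots, x_{g-1}$. After a linear change of coordinates, $R/(x_1, \ldots, x_{g-1}) \cong k[y_1, \ldots, y_{d+1}]$, and the image of $J:F$ there is a height-one Cohen-Macaulay, hence unmixed, ideal in a UFD, and therefore principal: it is generated by one polynomial $\overline{q}$. Lifting $\overline{q}$ back to some $q \in R$ gives $J:F = (x_1, \ldots, x_{g-1}, q)$ with $q \notin (x_1, \ldots, x_{g-1})$.
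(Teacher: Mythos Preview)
Your proof is correct and follows essentially the same strategy as the paper's: both reduce to a general artinian reduction, use the socle argument (as in the proof of Theorem~\ref{bound}) to force the Hilbert function of the reduction of $R/(J:F)$ to be identically $1$, and then deduce that $J:F$ contains $g-1$ independent linear forms. Your final step---showing that the image of $J:F$ modulo those linear forms is a height-one Cohen--Macaulay ideal in a polynomial ring, hence principal---is a slightly cleaner way to finish than the paper's ``lift back from the artinian case,'' but the overall architecture is the same.
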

\begin{proof}
From the short exact sequence 
$$(\star)\qquad\qquad 0\longrightarrow R/J:F[-\deg(F)] \longrightarrow R/J \longrightarrow R/I \longrightarrow 0 $$
one obtains $e(R/J)-e(R/I)=e((R/J:F)[-\deg(F)])=e(R/J:F)$.  First, assume $I$ is $\m$-primary. If $\deg(F)\geq s(R/J)$, then, by assumption of maximal multiplicity, $e(R/I)=e(R/J)-1$, whence $e(R/J:F)=1$. Then $J:F=\m$ and the statement follows. We may then assume $\deg(F)<s(R/J)$. Since  $J$ and $F$ form a maximal decomposition of $I$, the proof of Theorem \ref{bound} gives $HF_{R/I}(i)=HF_{R/J}(i)-1$ for all $\deg(F)\leq i \leq s(R/J)$.
This yields that $HF_{R/J:F}(i)=1$ if $0\leq i \leq s(R/J)-\deg(F)$, hence, there exist linearly independent linear forms $x_1,\ldots,x_{g-1},x_g$ such that $J:F=(x_1,\ldots,x_{g-1},x_g^{c})$ and the statement follows.

Now assume ${\rm dim}(R/I)=d>0$. Let $L_1,\ldots,L_d$ be general linear forms in $R$, set $L=(L_1,\ldots,L_d)$ and let $^-$ denote images in $\overline{R}=R/L$. Since $I$ is Cohen-Macaulay, $L_1,\ldots,L_d$ form a regular sequence on $R/I$, then, applying the functor $-\otimes_R R/L$ to the short exact sequence
$(\star)$, one obtains the short exact sequence
$$0\longrightarrow \overline{R}/\overline{J:F} \longrightarrow \overline{R}/\overline{J} \longrightarrow \overline{R}/\overline{I} \longrightarrow 0. $$
Clearly, $\overline{R}=R/L$ is still a polynomial ring, $\overline{I}$ is a homogeneous $\overline{\m}$-primary ideal, $\overline{R}/\overline{I}$ has maximal multiplicity, and $\overline{J}$ and $\overline{F}$ form a maximal decomposition of $\overline{I}$. Then, by the above, one has $\overline{J:F}=\overline{J}:\overline{F}=(\overline{z_1},\ldots,\overline{z_{g-1}},\overline{z_g}^c)$ for some linearly independent linear forms $z_1,\ldots,z_g$ of $\overline{R}$. 
The statement now follows by lifting this equality back to $R$. 
\end{proof}

For the rest of the paper, a complete intersection $C$ of height $g$ containing $g-1$ linearly independent linear forms is called an {\it almost linear complete intersection}.

\begin{prop}\label{ci2}
Let $I$ be a Cohen-Macaulay homogeneous ideal of height $g$ having maximal multiplicity. Then there exists an almost linear complete intersection $C'$ such that $(I+C')/C'$ is cyclic.
\end{prop}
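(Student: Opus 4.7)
The approach is to take $C' := J : F$ for a maximal decomposition $I = J + (F)$ of $I$. The structural input is Lemma \ref{ci}, which, since $I$ is Cohen-Macaulay of maximal multiplicity, gives $J:F = (x_1,\ldots,x_{g-1},q)$ with $x_1,\ldots,x_{g-1}$ linearly independent linear forms and $q \notin (x_1,\ldots,x_{g-1})$. Thus $C'$ automatically contains the required $g-1$ linearly independent linear forms.

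The first step is to verify that ${\rm ht}\,C' = g$, so that $C'$ is a complete intersection and therefore an almost linear complete intersection in the sense just defined. Since $R/J$ is Cohen-Macaulay, all associated primes of $J$ have height $g$; since $F \notin J$ while ${\rm ht}\,I = {\rm ht}\,J = g$, the element $F$ must lie in some such associated prime $\mathfrak{p}$. Consequently $J \subseteq J:F \subseteq \mathfrak{p}$, forcing ${\rm ht}\,C' = g$. An ideal of height $g$ generated by exactly $g$ elements is a complete intersection, completing this step.

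The second step is to show $(I+C')/C'$ is cyclic. The key observation is the trivial containment $J \subseteq J:F = C'$, since $jF \in J$ for every $j \in J$. Therefore
$$I + C' = J + (F) + C' = (F) + C',$$
so $(I + C')/C'$ is generated as an ideal of $R/C'$ by the image of $F$, and hence is cyclic.

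The main difficulty has already been absorbed by Lemma \ref{ci}, whose proof ran a Hilbert-function computation and a reduction to the artinian case to pin down the explicit form of $J:F$. Given that form, the present proposition is essentially a reformulation: once one chooses $C' = J:F$ and observes $J \subseteq J:F$, both the almost-linear-complete-intersection property of $C'$ and the cyclicity of $(I+C')/C'$ follow at once, with no further computation required.
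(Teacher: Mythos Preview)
Your approach is the same as the paper's: set $C'=J:F$ for a maximal decomposition, invoke Lemma~\ref{ci} for the almost-linear structure, and use $J\subseteq J:F$ to conclude that $(I+C')/C'$ is generated by the image of $F$.

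There is one small slip in your height argument. From $F\in\mathfrak{p}$ for an associated prime $\mathfrak{p}$ of $J$ it does \emph{not} follow that $J:F\subseteq\mathfrak{p}$; for instance, with $J=(xy)$ and $F=x$ in $k[x,y]$ one has $\mathfrak{p}=(x)\in{\rm Ass}(R/J)$ and $F\in\mathfrak{p}$, yet $J:F=(y)\not\subseteq(x)$. The desired conclusion ${\rm ht}\,C'=g$ is nonetheless immediate: Lemma~\ref{ci} presents $J:F$ with $g$ generators, so ${\rm ht}(J:F)\leq g$ by Krull's height theorem, while $J\subseteq J:F$ with ${\rm ht}\,J=g$ forces equality. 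With this correction your proof is complete and matches the paper's.
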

\begin{proof}
If $I$ is contained in an almost linear complete intersection, then $(I+C')/C'=0$ and the statement follows trivially. We may then assume $I$ is not contained in any almost linear complete intersection. Let $J$ and $F$ form a maximal decomposition of $I$. By Lemma \ref{ci}, the ideal $C'=J:F$ is an almost linear complete intersection. Note that $J\subseteq I\cap C'$. Since $I$ is not contained in $C'$, then $(I+C')/C'$ is non-zero. Now, the natural mapping $I/J\rightarrow I/I\cap C'\cong (I+C')/C'\rightarrow 0$ together with the assumption that $I/J$ is cyclic yields that also $(I+C')/C'$ is cyclic.
\end{proof}

Next, we prove a sufficient condition for Cohen-Macaulay ideals to have maximal multiplicity. The assumption on $(I+C')/C'$ being cyclic is  necessary, by Lemma \ref{ci2}.
\begin{thm}\label{char}
Let $I$ be a homogeneous Cohen-Macaulay ideal of height $g$. If there exists an almost linear complete intersection $C'$ satisfying the following two conditions:
\begin{itemize}
\item[$($i$)$] $(I+C')/C'$ is non-zero and cyclic, generated by an element of degree $\delta \geq 1$, and
\item[$($ii$)$] $e(R/C')\leq {\rm max}\{1,s(R/I)-\delta+1\}$,
\end{itemize}
then $I$ has maximal multiplicity.
\end{thm}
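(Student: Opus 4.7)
My plan is to exhibit a maximal decomposition of $I$ by setting $J := I \cap C'$ and choosing $F \in I$ to be a homogeneous lift of the cyclic generator of $(I+C')/C'$. For any $a \in I$, writing $\overline{a} = r\overline{F}$ inside $(I+C')/C' = (R/C')\overline{F}$ produces $r \in R$ with $a - rF \in C'$, hence $a - rF \in I \cap C' = J$. This gives $I = J + (F)$; moreover $F \notin J$ (since $F \notin C' \supseteq J$), and $\operatorname{ht}(J) = g$ because the associated primes of $I \cap C'$ lie among those of $I$ and of $C'$.

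The main engine is the Mayer--Vietoris short exact sequence
\begin{equation*}
0 \to R/J \to R/I \oplus R/C' \to R/(I+C') \to 0. \qquad (\heartsuit)
\end{equation*}
Since $C' = (x_1, \dots, x_{g-1}, q)$ is an almost linear complete intersection and $\overline{I} = (\overline{F})$ is principal in the hypersurface $R/C' \cong k[X_g, \dots, X_n]/(\overline{q})$, I would verify that $R/(I+C') = (R/C')/(\overline{F})$ is Cohen-Macaulay of its own dimension, treating separately the cases where $\overline{F}$ is a non-zero-divisor on $R/C'$ and where $\overline{F}$ is a zero-divisor whose annihilator is controlled by a divisor of $\overline{q}$. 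The depth lemma applied to $(\heartsuit)$ then forces $R/J$ to be Cohen-Macaulay of dimension $d := n-g$. Taking leading Hilbert-polynomial coefficients in $(\heartsuit)$ (with contributions from modules of dimension less than $d$ set to zero) yields
\begin{equation*}
e(R/J) - e(R/I) \;=\; e(R/C') - e(R/(I+C')). \qquad (\ast)
\end{equation*}

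Applying Theorem~\ref{bound} to $(J, F)$ gives $e(R/J) - e(R/I) \geq \max\{1,\, s(R/J) - \delta + 1\}$. For the reverse inequality, I would combine hypothesis (ii) with a careful comparison of socle degrees: since $A_I \cong A_J/\overline{F}A_J$, tracking how socle elements of $A_J$ transfer to $A_I$ (and how new socle elements can appear, necessarily in degree at least $\delta - 1$) should convert the hypothesis $e(R/C') \leq \max\{1,\, s(R/I) - \delta + 1\}$ into the bound $e(R/C') - e(R/(I+C')) \leq \max\{1,\, s(R/J) - \delta + 1\}$. Combined with $(\ast)$, this yields equality in Theorem~\ref{bound}, which is precisely the maximal-multiplicity condition for the decomposition $(J, F)$.

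The hard part will be the reverse inequality in the final step: the tight socle-degree bookkeeping required to translate the hypothesis (phrased in terms of $s(R/I)$) into a bound involving $s(R/J)$, with the deficit $e(R/(I+C'))$ accounting for the discrepancy between the two socle degrees. A secondary obstacle is verifying Cohen-Macaulayness of $R/(I+C')$ in the zero-divisor case, which relies essentially on the almost linear structure of $C'$ and on $\overline{I}$ being principal in $R/C'$.
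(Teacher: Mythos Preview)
Your setup ($J = I \cap C'$, $F$ a lift of the cyclic generator, $I = J + (F)$) matches the paper's exactly, but the paper takes a shorter route to both of your hard steps by introducing the colon ideal $C := J:F = C':F$. Because $C' \subseteq C$ are unmixed of the same height and $C'$ is almost linear, $C = (x_1, \dots, x_{g-1}, q)$ is again an almost linear complete intersection with $e(R/C) = \deg(q) \leq e(R/C')$. The paper then uses the colon short exact sequence
\[
0 \longrightarrow (R/C)[-\delta] \stackrel{\cdot F}{\longrightarrow} R/J \longrightarrow R/I \longrightarrow 0,
\]
which immediately gives $R/J$ Cohen--Macaulay (both ends are Cohen--Macaulay of the same dimension) and $e(R/J) - e(R/I) = e(R/C) = \deg(q)$. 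This bypasses Mayer--Vietoris and any analysis of $R/(I+C')$ entirely.

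Regarding your ``secondary obstacle'': the claim that $R/(I+C')$ is Cohen--Macaulay of its own dimension is not correct in the zero-divisor case. For instance $(\overline{q'}, \overline{F}) = (x^2, xy)$ in $k[x,y]$ has dimension $1$ but depth $0$. What your depth-lemma argument actually needs is only $\operatorname{depth} R/(I+C') \geq d-1$, and this \emph{does} hold --- but the cleanest way to see it is to note that $(I+C')/C' \cong (R/C)[-\delta]$ via multiplication by $F$, so you end up introducing $C$ anyway.

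For the ``hard part'', the paper applies the Horseshoe Lemma to the colon sequence to obtain $s(R/J) \geq \min\{s(R/I),\, s((R/C)[-\delta])\}$. Since $C$ is an almost linear complete intersection one has $s((R/C)[-\delta]) = \delta + \deg(q) - 1$, and hypothesis (ii) gives $\deg(q) \leq e(R/C') \leq s(R/I) - \delta + 1$, hence $s(R/J) \geq \delta + \deg(q) - 1$, i.e.\ $\deg(q) \leq s(R/J) - \delta + 1$; equality in Theorem~\ref{bound} follows. Your proposed socle bookkeeping can be pushed through, but to bound the degree of a socle element $x = \overline{F}y \in \operatorname{Soc}(A_J)$ sharply you must show $\bar y$ lies in $\operatorname{Soc}(A_J/\operatorname{ann}_{A_J}\overline{F})$ --- and $\operatorname{ann}_{A_J}(\overline{F})$ is exactly the image of $C/J$. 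So the colon ideal $C$ is the unavoidable key object either way, and working with the colon sequence from the start, as the paper does, is the more efficient packaging.
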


\begin{proof}
Let $F$ be a homogeneous element of $I$ of degree $\delta\geq 1$ whose image generates the cyclic module $(I+C')/C'$. Set $J=C'\cap I$ and note that $I=J+(F)$. Set $C=J:F=C':F$. Since $C'\subseteq C$ are unmixed of the same height, the ideal $C$ is again an almost linear complete intersection and $e(R/C)\leq e(R/C')$. Write $C=(x_1,\ldots,x_{g-1},q)$ and note that $e(R/C)={\rm deg}(q)\leq e(R/C')\leq  {\rm max}\{1,s(R/I)-\delta+1\}$. We need to show that $J$ is Cohen-Macaulay and $e(R/I)=e(R/J)-{\rm max}\{1,s(R/J)-\delta+1\}$.

Since $I$ and $C$ are Cohen-Macaulay ideals of height $g$, the short exact sequence 
$$(\star) \qquad\qquad 0\longrightarrow (R/C)[-\delta] \stackrel{\cdot F}{\longrightarrow} R/J \longrightarrow R/I \longrightarrow 0$$
implies that $R/J$ is Cohen-Macaulay too. Now, if $1\geq s(R/I)-\delta+1$, then $e(R/C)\leq 1$ and, since $C$ is a proper ideal, then $e(R/C)=1$, yielding that $e(R/I)=e(R/J)-e(R/J:F)=e(R/J)-1$. This proves that $I$ has maximal multiplicity. We may then assume $s(R/I)-\delta+1> 1$. 
The Horseshoe Lemma applied to $(\star)$ gives $s(R/J)\geq {\rm min}\{s(R/I),s(R/C[-\delta])\}$.
Since $C$ is an almost linear complete intersection, we have
$$s(R/C[-\delta])=\delta+s(R/C)=\delta+(\deg(q)+g-1)-g=\delta+\deg(q)-1,$$ 
whence we obtain
$$s(R/J)\geq {\rm min}\{s(R/I),s(R/C[-\delta])\}={\rm min}\{s(R/I),\delta+\deg(q)+-1\}=\delta+\deg(q)-1,$$
where the last equality holds because the inequalities
$${\rm deg}(q)=e(R/C)\leq e(R/C')\leq {\rm max}\{1,s(R/I)-\delta+1\}=s(R/I)-\delta+1$$
imply that $\delta+\deg(q)-1\leq s(R/I)$.  Then, we have obtained $\delta+\deg(q)-1\leq s(R/J)$, that is, $\deg(q)\leq s(R/J)-\delta+1$. We now apply Theorem \ref{bound} and obtain $$\begin{array}{lll}
e(R/J)-e(R/C) & =\, e(R/I) & \leq \,e(R/J)-{\rm max}\{1,s(R/J)-\delta+1\}\\
              & \leq \,e(R/J)-{\rm max}\{1,{\rm deg}(q)\} & =\, e(R/J)-{\rm deg}(q)\\
              &= \,e(R/J)-e(R/C) &
\end{array}$$
proving that $e(R/I)=e(R/J)-{\rm max}\{1,s(R/J)-\delta+1\}$. 
\end{proof}

Note that the proof of Theorem \ref{char} is constructive, in the sense that if $(I+C')/C'$ is non-zero, cyclic and $e(R/C')\leq  {\rm max}\{1,s(R/I)-\delta+1\}$, then one can explicitly construct a maximal decomposition $I=J+(F)$ of the ideal $I$. 

We isolate the special case where  $C'$ is a {\it linear prime}, that is, where $C'$ is a prime ideal generated by linear forms. 
\begin{cor}\label{char2}
Let $I$ be a homogeneous Cohen-Macaulay ideal of height $g$. If there exists a linear prime $C'$ of height $g$ such that $(I+C')/C'$ is cyclic and non-zero, then $I$ has maximal multiplicity.
\end{cor}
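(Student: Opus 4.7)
The plan is to verify the two hypotheses of Theorem \ref{char} and then invoke that theorem directly. Since $C'$ is a linear prime, the verification is nearly automatic, so no real obstacle should arise beyond a bit of bookkeeping.

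First I would observe that a linear prime $C'$ of height $g$ in the polynomial ring $R$ is in particular an almost linear complete intersection: a minimal system of homogeneous generators consisting of linear forms must contain exactly $g$ linearly independent linear forms (by the height assumption and Nakayama), so $R/C'$ is isomorphic to a polynomial ring in the remaining $n-g$ variables. In particular $e(R/C') = 1$.

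Next, to verify condition (i) of Theorem \ref{char}, I would use that both $I$ and $C'$ are homogeneous, so $(I+C')/C'$ is a graded cyclic $R/C'$-module and the generating element may be chosen homogeneous. Because $I$ is a proper ideal of the polynomial ring $R$, every nonzero homogeneous element of $I$ has positive degree, and hence the degree $\delta$ of any such generator satisfies $\delta \geq 1$.

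Condition (ii) then reads $e(R/C') \leq \max\{1, s(R/I) - \delta + 1\}$, which is immediate from $e(R/C') = 1$ regardless of the values of $s(R/I)$ and $\delta$. Theorem \ref{char} therefore applies and yields that $I$ has maximal multiplicity. The only place a subtlety could lurk is the graded cyclicity of $(I+C')/C'$ forcing $\delta \geq 1$, but as noted this is forced by the graded structure together with the assumption that $I$ is a proper homogeneous ideal; beyond that, the corollary is a direct specialization of Theorem \ref{char} to the case where $e(R/C')$ attains its minimum possible value.
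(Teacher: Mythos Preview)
Your proposal is correct and takes essentially the same approach as the paper: the paper's proof is the single line ``By assumption $e(R/C')=1$, so the multiplicity condition of Theorem \ref{char} is trivially satisfied.'' You have simply filled in the routine verifications (that a linear prime is an almost linear complete intersection, that the generator can be taken homogeneous of degree $\delta\geq 1$, and that $e(R/C')=1\leq\max\{1,s(R/I)-\delta+1\}$) which the paper leaves implicit.
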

\begin{proof}
By assumption $e(R/C')=1$, so the multiplicity condition of Theorem \ref{char} is trivially satisfied.
\end{proof}

We now exhibit two classes of ideals having maximal multiplicity. 
\begin{ex}\label{m-prim}
Let $I$ be any homogeneous $\m$-primary ideal, then $I$ has maximal multiplicity.
\end{ex}
\begin{proof}
Let $F$ be any minimal generator of $I$, and set $J=I+F\cdot \mm$. Then $I$ has maximal multiplicity because $I=J+(F)$ and $e(R/I)=e(R/J)-1$.
\end{proof}

Also ideals generated by the 2 by 2 minors of catalecticant matrices have maximal multiplicity. For instance, ideals
 defining rational normal curves have maximal multiplicity. 
\begin{cor}\label{matrix}
Fix integers $d\geq 1$, $r\geq 2$ and $N\geq 3$. Then the ideal $I=I_2(A)$ generated by the $2$-minors of the matrix  $$A=\left(\begin{array}{cccccc} 
x_1^d & x_2^d & \ldots & x_{N-r+2}^d\\
x_2^d & x_3^d & \ldots & x_{N-r+1}^d\\
\vdots & \vdots & \vdots & \vdots\\
x_r^d & x_{r+1}^d & \ldots & x_{N+1}^d
\end{array} \right) $$
has maximal multiplicity.

In particular, the defining ideals of the rational normal curves of $\mathbb P^N$ for any $N\geq 3$ have maximal multiplicity.
\end{cor}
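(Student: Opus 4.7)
My plan is to apply Corollary~\ref{char2} with the linear prime $C' := (x_2, x_3, \ldots, x_N) \subset R = k[x_1, \ldots, x_{N+1}]$, which has height $N-1$. The key observation is that modulo $C'$ the catalecticant matrix collapses so that the only two surviving entries sit in opposite corners, leaving a single nonzero $2\times 2$ minor.

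First I will verify that $R/I$ is Cohen-Macaulay of height $N-1$, matching the height of $C'$. Let $\tilde R = k[y_1, \ldots, y_{N+1}]$ and let $\tilde A$ be the analogous Hankel matrix with entries $\tilde A_{ij} = y_{i+j-1}$. The ring map $\psi\colon \tilde R \to R$ sending $y_i \mapsto x_i^d$ makes $R$ into a finite free $\tilde R$-module of rank $d^{N+1}$, and satisfies $I_2(A) = I_2(\tilde A)R$. Classically, $I_2(\tilde A)$ is the defining ideal of the rational normal curve of degree $N$ in $\mathbb P^N$: the $2\times 2$ minors of any $r \times (N-r+2)$ Hankel matrix on $y_1, \ldots, y_{N+1}$ (with $r \geq 2$) generate the same ideal as those of the $2 \times N$ Hankel matrix, which has the Eagon-Northcott resolution of length $N-1$ and is Cohen-Macaulay of height $N-1$. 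Tensoring a minimal $\tilde R$-free resolution of $\tilde R/I_2(\tilde A)$ with $R$ over $\tilde R$ preserves exactness by flatness, producing a free $R$-resolution of $R/I$ of length at most $N-1$. Since $R/I$ is a finite free $\tilde R/I_2(\tilde A)$-module of Krull dimension $2$, the Auslander-Buchsbaum formula forces $\pd_R(R/I) = N-1$ and ${\rm depth}(R/I) = 2 = {\rm dim}(R/I)$.

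Next I will compute $(I + C')/C'$ explicitly. The entry $A_{ij} = x_{i+j-1}^d$ vanishes modulo $C'$ unless $i+j-1 \in \{1, N+1\}$, which forces $(i,j) = (1,1)$ with entry $x_1^d$, or $(i,j) = (r, N-r+2)$ with entry $x_{N+1}^d$. Every $2\times 2$ minor of the reduced matrix vanishes except the one selecting rows $\{1,r\}$ and columns $\{1, N-r+2\}$, which equals $x_1^d x_{N+1}^d$. Inside $R/C' \cong k[x_1, x_{N+1}]$ this element is nonzero, so $(I+C')/C'$ is generated by the single element $\overline{x_1^d x_{N+1}^d}$, hence cyclic and nonzero. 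Corollary~\ref{char2} then delivers that $I$ has maximal multiplicity. The rational normal curve of degree $N$ in $\mathbb P^N$ is precisely the $r=2$, $d=1$ instance.

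The hard part will be establishing Cohen-Macaulayness of $R/I$ when $d \geq 2$, since the classical Hankel-determinantal theorems cover only the $d=1$ case directly. The flat base change via $\psi$ reduces the general case to the classical one, but care is required since $\psi$ scales degrees by $d$; one addresses this by regrading $\tilde R$ with $\deg y_i = d$ (making $\psi$ a graded map) or by arguing the Cohen-Macaulay property locally at the irrelevant maximal ideal, where grading is immaterial.
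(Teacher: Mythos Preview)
Your proof is correct and follows essentially the same route as the paper: both choose the linear prime $C'=(x_2,\ldots,x_N)$, observe that modulo $C'$ only the corner entries $x_1^d$ and $x_{N+1}^d$ survive so that $(I+C')/C'$ is cyclic and nonzero, and then invoke Corollary~\ref{char2}. The only difference is that the paper simply asserts ``it is known that ${\rm ht}(I)=N-1$'' and implicitly that $R/I$ is Cohen--Macaulay, whereas you supply the flat base-change argument from the classical Hankel ideal to justify this hypothesis; this extra care is appropriate, and your regrading remark handles the degree mismatch cleanly.
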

\begin{proof}
It is known that ${\rm ht}(I)=N-1$. Set $C'=(x_2,\ldots,x_{N})$, and note that $(I+C')/C'$ is cyclic and non-zero, generated by the image of $F=x_1^dx_{N+1}^d-x_r^dx_{N-r+2}^d$ in $R/C'$. Then, by Corollary \ref{char2}, $I$ has maximal multiplicity, and a maximal decomposition of $I$ is given by $I=J+(F)$, where $J$ is the ideal generated by all the 2 by 2 minors of $A$ except for $F$. Rational normal curves consist of the special case where $r=2$ and $d=1$.
\end{proof}
We remark that, in contrast with the ideals $I$ satisfying equality in Corollary \ref{En2}, these ideals can be generated in arbitrarily high degrees and can have arbitrarily large heights.

We conclude this section with a class of almost complete intersections $I$ of height $3$, generated in a single degree $d\geq 4$ and having maximal multiplicity (compare with the discussion after Corollary \ref{En2}). Note that, for these ideals, there is no linear prime $C'$ of height 3 such that $(I+C')/C'$ is cyclic, hence one cannot use Corollary \ref{char2} to prove that $I$ has maximal multiplicity.
\begin{ex}\label{aci}
Fix $t\geq 1$, let $f_i$, $g_i$, $h_i$, where $i=1,2$, be irreducible polynomials of
the same degree $t+1$ in disjoint sets of variables such that $f_1$ is
contained in a linear prime of height $2$. Set $F=h_1h_2$ and $J=(f_1f_2, g_1g_2, f_1g_1h_1)$. Then 
$I =J+(F)$ is a Cohen-Macaulay almost complete
intersection that has maximal multiplicity. 
\end{ex}

\begin{proof}
We first show that $I$ is Cohen-Macaulay. Observe that the ideals $(f_l,g_i,h_j)$ with $1\leq l\leq 2$, $1\leq l=i\leq 2$ and $1\leq j\leq 2$ are all prime. Since
$$I=\left(\bigcap_{1\leq i \leq 2, 1\leq j \leq 2}(f_1,g_i,h_j)\right) \cap (f_2,g_1,h_1)\cap (f_2,g_1,h_2)\cap (f_2,g_2,h_1),$$
then $I$ is unmixed. Also, since $C_0=(f_1f_2,g_1g_2,h_1h_2)$ is a complete intersection of height 3 contained in $I$, then $I\sim C_0:I$ by Proposition \ref{PS}. Since $C_0:I=(f_2,g_2,h_2)$ is a complete intersection, then, by Proposition \ref{linkage}, the ideal $I$ is Cohen-Macaulay.

Let $(x_1,x_2)$ be a linear prime of height $2$ containing $f_1$, and set $C'=(x_1,x_2,g_1)$. Then $(I+C')/C'$ is cyclic, generated by the image of $F=h_1h_2$ in $R/C'$. We have
$s(R/I)=5t+2$, hence $s(R/I)-\deg(F)+1=(5t+2)-(2t+2)+1=3t+1>3$ for every $t\geq 1$. Then
$$e(R/C')=2< 3t+1={\rm max}\{1,s(R/I)-\deg(F)+1\}.$$
Then $I$ has maximal multiplicity by Theorem \ref{char}.
\end{proof}
For instance, let $R=k[x_1,\ldots,x_8,y_1,\ldots,y_8,z_1,\ldots,z_8]$,  
$f_1=x_1^tx_2-x_3^tx_4$, $g_1=y_1^ty_2-y_3^ty_4$, $h_1=z_1^tz_2-z_3^tz_4$, 
$f_2=x_5^tx_6-x_7^tx_8$, $g_2=y_5^ty_6-y_7^ty_8$, $h_2=z_5^tz_6-z_7^tz_8$, and $F=h_1h_2$.
Then $I=(f_1f_2,g_1g_2,f_1g_1h_1,F)$ is a Cohen-Macaulay almost complete intersection that has maximal multiplicity and is generated in degree $2t+2\geq 4$ for any $t\geq 1$.

\section{Application to Quasi-Gorenstein rings}

In this section we prove a lower bound for the multiplicity of quasi-Gorenstein rings and a sufficient condition for a quasi-Gorenstein ring to be Gorenstein. 
\bigskip

We first recall the definition of graded quasi-Gorenstein rings (also known as $1$-Gorenstein rings). Recall that the canonical module of a $d$-dimensional graded ring $S$ with homogeneous maximal ideal $\mm_S$ is defined as $\omega_S={\rm Hom}_k(H_{\mm_S}^d(S),E)$, where $E=E_S(S/\mm_S)$ is the injective envelope of the residue field of $S$.

A graded ring $S$ is {\em quasi-Gorenstein} if $\omega_S\cong S(a)$ for some integer $a$. An ideal $Q$ is said to be {\em quasi-Gorenstein} if $R/Q$ is quasi-Gorenstein. The number $a=a(R/Q)$ is the $a${\it -invariant} of $R/Q$. Note that quasi-Gorenstein ideals are unmixed (indeed, their factor rings satisfy Serre's property $(S_2)$).

Quasi-Gorenstein rings arise naturally in several contexts, for instance, (extended) Rees algebras (cf. \cite[Theorem~2.8]{VZ}, \cite[Theorem~3.2]{JU}, and \cite[Theorem~6.1]{HKU}) or coordinate rings of cones over abelian surfaces (see, for instance, \cite{Sc}). From the definition, it follows that an ideal $Q$ of $R$ is Gorenstein if and only if $Q$ is quasi-Gorenstein and Cohen-Macaulay. We will employ the following result, essentially proved by Schenzel \cite[Proposition~1]{Sc2}.
\begin{prop}\label{linkQG}
Let $Q$ be an $R$-ideal. The following are equivalent:
\begin{itemize}
\item[(i)] $Q$ is quasi-Gorenstein;
\item[(ii)] there exist a Gorenstein ideal $G\subseteq Q$ and $h\in R$ so that $Q\sim G+hR$ by $G$.
\end{itemize}  
\end{prop}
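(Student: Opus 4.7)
The plan is to use the standard linkage-theoretic identification: whenever two unmixed ideals are linked by a Gorenstein ideal, $J\sim K$ by $G$, there is an isomorphism $K/G \cong \omega_{R/J}$ of graded $R$-modules up to a degree shift. This comes from the duality $\mathrm{Hom}_{R/G}(R/J,\,R/G)\cong (G:J)/G = K/G$ combined with the Gorensteinness of $R/G$, which makes $R/G$ its own graded canonical module up to shift. This identification converts the quasi-Gorenstein condition $\omega_{R/Q}\cong (R/Q)(a)$ into a cyclicity statement about the residual $K/G$ of a Gorenstein link.

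For the direction $(ii)\Rightarrow(i)$, I would argue directly from the above. If $Q\sim G+hR$ by the Gorenstein ideal $G$, then $\omega_{R/Q}\cong (G+hR)/G$ up to shift. The quotient $(G+hR)/G$ is cyclic, generated by the image of $h$, and the multiplication-by-$h$ map induces an identification $(G+hR)/G \cong R/(G:h)$. Since $G\subseteq G+hR$, the colon formula gives $G:h = G:(G+hR) = Q$. Putting these together yields $\omega_{R/Q}\cong (R/Q)(b)$ for some integer $b$, so $Q$ is quasi-Gorenstein.

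For $(i)\Rightarrow(ii)$, since $R/Q$ is quasi-Gorenstein it satisfies Serre's condition $(S_2)$, so $Q$ is unmixed; let $g = \mathrm{ht}(Q)$. Because $k$ is infinite, I can choose a homogeneous regular sequence $f_1,\ldots,f_g$ inside $Q$ and set $G = (f_1,\ldots,f_g)$, a complete intersection (hence Gorenstein) ideal of height $g$ contained in $Q$. Proposition \ref{PS} then produces the link $Q\sim K$ by $G$, where $K = G:Q$. Applying the linkage isomorphism and the quasi-Gorenstein hypothesis gives $K/G \cong \omega_{R/Q}(a) \cong (R/Q)(a+b)$ up to shift, so $K/G$ is a cyclic $R$-module. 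Lifting a homogeneous generator to $h\in K$ gives $K = G + hR$, as required.

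The main obstacle is invoking the linkage identification $K/G \cong \omega_{R/J}$ cleanly with the correct graded shift; this is a classical fact, but the bookkeeping with degree shifts and the care needed when $R/J$ is not necessarily Cohen--Macaulay (it need not be, being only quasi-Gorenstein) both deserve attention. The remaining ingredients — producing the regular sequence inside $Q$, computing $G:(G+hR)=Q$, and lifting a cyclic generator of $K/G$ — are routine.
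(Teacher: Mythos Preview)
The paper does not supply its own proof of this proposition; it attributes the result to Schenzel \cite[Proposition~1]{Sc2} and simply states it. Your argument is the standard linkage-theoretic one and is essentially correct: the identification $(G:J)/G \cong \omega_{R/J}$ (up to shift), valid because $R/G$ is Gorenstein and hence self-dual, is exactly the ``standard exact sequence from linkage'' that the paper itself invokes later in the proof of Proposition~\ref{boundQ}. Both directions then reduce, as you say, to the observation that $\omega_{R/Q}$ is cyclic over $R/Q$ precisely when the residual $K/G$ is cyclic over $R$.

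Two small points worth tightening. First, in $(ii)\Rightarrow(i)$ you should note at the outset that $Q=G:(G+hR)$ forces $Q$ to be unmixed of height $g=\mathrm{ht}\,G$, so that $\omega_{R/Q}=\mathrm{Ext}^g_R(R/Q,\omega_R)$ is the correct Ext to compute; without equidimensionality the identification with $(G:Q)/G$ does not immediately give the canonical module. Second, your worry about the non-Cohen--Macaulay case is unfounded here: the isomorphism $\mathrm{Ext}^g_R(R/Q,R)\cong \mathrm{Hom}_{R/G}(R/Q,R/G)\cong (G:Q)/G$ only uses that $R/G$ is Cohen--Macaulay and Gorenstein, not that $R/Q$ is. With these clarifications your proof is complete and matches the classical argument.
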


We now give a lower bound for the multiplicity of graded quasi-Gorenstein rings.
\begin{prop}\label{boundQ}
If $Q$ is a homogeneous quasi-Gorenstein ideal, then 
$$e(R/Q)\geq {\rm max}\left\{1, a(R/Q)+{\rm dim}(R/Q)+1\right\}.$$
\end{prop}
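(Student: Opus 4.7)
My plan is to combine Proposition \ref{linkQG} with Theorem \ref{bound}. By Proposition \ref{linkQG}, there exist a Gorenstein ideal $G \subseteq Q$ and a homogeneous element $h \in R$ such that $Q$ is linked to $I := G + (h)$ via $G$. Since linked ideals have the same height, $\mathrm{ht}(I) = \mathrm{ht}(G) = \mathrm{ht}(Q)$, and in particular $h$ is a zero-divisor on $R/G$, so Theorem \ref{bound} applies to $I = G + (h)$ and yields
\[
e(R/I) \leq e(R/G) - \max\{1,\, s(R/G) - \deg(h) + 1\}.
\]
Combining this with the multiplicity formula $e(R/G) = e(R/Q) + e(R/I)$ from Proposition \ref{linkage}(b) gives
\[
e(R/Q) \geq \max\{1,\, s(R/G) - \deg(h) + 1\}.
\]

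Set $d = \dim(R/G) = \dim(R/Q)$. Because $R/G$ is Gorenstein, $\omega_{R/G} \cong (R/G)(a(R/G))$, so the alternative description $s(R/J) = c(R/J) + \dim(R/J)$ recalled in the paper specializes to $s(R/G) = a(R/G) + d$. Thus the inequality above becomes exactly the desired bound $e(R/Q) \geq \max\{1, a(R/Q) + d + 1\}$ as soon as one establishes the identity
\[
a(R/Q) = a(R/G) - \deg(h).
\]

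To prove this identity, I use the canonical module identification that accompanies Gorenstein linkage. Since $R/Q$ is an $R/G$-module and $R/G$ is Gorenstein, the change-of-rings isomorphism $\omega_{R/Q} \cong \mathrm{Hom}_{R/G}(R/Q, \omega_{R/G})$ combined with $\omega_{R/G} \cong (R/G)(a(R/G))$ gives $\omega_{R/Q} \cong (I/G)(a(R/G))$, where the annihilator computation $0 :_{R/G} Q = (G:Q)/G = I/G$ is used. On the other hand, $I = G + (h)$ means that $I/G$ is cyclic as a graded $R$-module, generated by the class of $h$ in degree $\deg(h)$ and with annihilator $G:h = Q$; hence $I/G \cong (R/Q)(-\deg(h))$. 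Putting the two isomorphisms together, $\omega_{R/Q} \cong (R/Q)(a(R/G) - \deg(h))$. Comparing this with the quasi-Gorenstein hypothesis $\omega_{R/Q} \cong (R/Q)(a(R/Q))$ and noting that the shift in a cyclic presentation of $R/Q$ over itself is rigid forces $a(R/Q) = a(R/G) - \deg(h)$, as desired.

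The main obstacle is carefully tracking the graded shifts in the canonical module identity under Gorenstein linkage; the underlying isomorphism is classical (going back to Schenzel), but it is essential that both the linkage shift by $a(R/G)$ and the shift coming from the cyclic generator of $I/G$ in degree $\deg(h)$ be assembled correctly. Once this is done, the proposition follows by substitution into the inequality coming from Theorem \ref{bound}.
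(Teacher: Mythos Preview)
Your proof is correct and follows essentially the same approach as the paper: invoke Proposition~\ref{linkQG} to write $Q\sim I=G+(h)$, apply Theorem~\ref{bound} to $I$, use Proposition~\ref{linkage}(b), and then identify $a(R/Q)=a(R/G)-\deg(h)$ via the linkage isomorphism $\omega_{R/Q}\cong (I/G)(a(R/G))$. The only small omission is that you do not explicitly rule out $h\in G$ (equivalently, that $Q$ is proper), which is needed to invoke Theorem~\ref{bound}; the paper notes that $h\in G$ would force $Q=G:G=R$.
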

\begin{proof}
We may assume $Q$ is proper.
 By Proposition \ref{linkQG}, there exists a Gorenstein ideal $G\subseteq Q$ with ${\rm ht}\,G={\rm ht}\,Q$ and $Q\sim I$,
 where $I=G+hR$ for some element $h$. Note that $h\notin G$ (otherwise $I=G$ and then, by Proposition \ref{PS}, $Q=G:G=R$, contradicting $Q$ is proper) and ${\rm ht}\,I={\rm ht}\,G$. Then, by Proposition \ref{linkage} (b) and Theorem \ref{bound}, we have
$$e(R/Q)=e(R/G)-e(R/I)\geq {\rm max}\{1, s(R/G)-{\rm deg}(h)+1 \}.$$
Now, from the standard exact sequence from linkage
$$0\rightarrow G \rightarrow I \rightarrow \omega_{R/Q}(-a(R/G)) \rightarrow 0$$
we have $\omega_{R/Q}\cong I/G[a(R/G)]$. Since $I/G$ is generated by the image of $h$, we obtain $a(R/Q)=a(R/G)-{\rm deg}(h)$. This fact, together with the equalities $s(R/G)={\rm dim}(R/G)+a(R/G)$ and ${\rm dim}(R/G)={\rm dim}(R/Q)$, yields
$$\begin{array}{ll}
e(R/Q)&\geq {\rm max}\{1,s(R/G)-\deg(h)+1\}\\
      &={\rm max}\{1, {\rm dim}(R/G)+a(R/G)-\deg(h)+1 \}\\
      &={\rm max}\{1, \dim(R/Q)+a(R/Q)+1 \}.
      \end{array}$$
\end{proof}

We now prove a multiplicity-based sufficient condition for $R/Q$ to be Gorenstein.
\begin{thm}\label{Gor}
Let $Q$ be a homogeneous quasi-Gorenstein ideal. If 
$$e(R/Q)= {\rm max}\left\{1, a(R/Q)+{\rm dim}(R/Q)+1\right\},$$
then $R/Q$ is Gorenstein.
\end{thm}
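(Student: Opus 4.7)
The plan is to leverage the equality hypothesis to force the link $I$ of $Q$ (via Proposition \ref{linkQG}) to have maximal multiplicity in the sense of Section 2, and then use Theorem \ref{Main} or Samuel's classical theorem to deduce Cohen-Macaulayness of $R/Q$. Since $R/Q$ is already quasi-Gorenstein (hence unmixed and $S_2$), Cohen-Macaulay plus quasi-Gorenstein will upgrade it to Gorenstein, as noted in the paragraph preceding Proposition \ref{linkQG}.

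First, I would invoke Proposition \ref{linkQG} to obtain a Gorenstein ideal $G \subseteq Q$ and an element $h \in R \setminus G$ with $\mathrm{ht}\,G = \mathrm{ht}\,Q$ such that $Q$ is linked to $I := G + hR$ via $G$. Reading off the chain of identifications in the proof of Proposition \ref{boundQ}, I have $e(R/Q) = e(R/G) - e(R/I)$ and $s(R/G) - \deg(h) + 1 = a(R/Q) + \dim(R/Q) + 1$. Plugging these into the equality hypothesis yields
\[
e(R/I) = e(R/G) - \max\{1, s(R/G) - \deg(h) + 1\},
\]
so that $I = G + (h)$ is a maximal decomposition in the sense of Section 2 and $I$ has maximal multiplicity.

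Now split on the sign of $s(R/G) - \deg(h)$. If $\deg(h) \leq s(R/G)$, then Theorem \ref{Main}(a) applies to the maximal decomposition $I = G + (h)$ and gives that $R/I$ is Cohen-Macaulay; by the linkage result Proposition \ref{linkage}(a), $R/Q$ is Cohen-Macaulay as well. Being both quasi-Gorenstein and Cohen-Macaulay, $R/Q$ is Gorenstein.

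If instead $\deg(h) > s(R/G)$, then $a(R/Q) + \dim(R/Q) + 1 = s(R/G) - \deg(h) + 1 \leq 0$, so the maximum in the hypothesis equals $1$ and $e(R/Q) = 1$. Since quasi-Gorenstein rings are unmixed, Samuel's classical theorem (the same result invoked as Proposition \ref{almostCM} in the proof of Theorem \ref{Main}(c)) forces $R/Q$ to be Cohen-Macaulay, hence Gorenstein. The main subtlety is simply verifying that the two cases together exhaust the possibilities and each leads back to Cohen-Macaulayness of $R/Q$; the sharp part of Theorem \ref{Main} (part (c)) is not needed for the proof, only part (a) together with Samuel's theorem.
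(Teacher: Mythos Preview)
Your proof is correct and follows essentially the same route as the paper: link $Q$ to $I=G+(h)$ via Proposition \ref{linkQG}, read off from the proof of Proposition \ref{boundQ} that $I$ has maximal multiplicity, and then transfer Cohen--Macaulayness back to $R/Q$ via Proposition \ref{linkage}(a). The only difference is that the paper avoids your case split by observing directly that $-a(R/Q)\le \dim(R/Q)$ forces $\deg(h)\le s(R/G)$, so Theorem \ref{Main} applies outright; your handling of the residual case $e(R/Q)=1$ via Samuel's theorem is a harmless (and arguably more self-contained) alternative.
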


\begin{proof}
We may assume $Q$ is proper. Let $G\subseteq Q$ be a Gorenstein ideal with ${\rm ht}\,G={\rm ht}\,Q$, and set $I=G:Q$.
From the proof of Proposition \ref{boundQ}, the given equality implies that $I$ has maximal multiplicity. 

Since ${\rm deg}(h)=a(R/G)-a(R/Q)$ and $-a(R/Q)\leq {\rm dim}(R/Q)$, then we have ${\rm deg}(h)\leq s(R/G)$. By Theorem \ref{Main} (b), this yields that $R/I$ is Cohen-Macaulay. Thus, by Proposition \ref{linkage} (a), $R/Q$ is Cohen-Macaulay and, then, Gorenstein.
\end{proof}

Next, we would like to point out an analogy between the two extremal values of $e(R/I)$. 
Assume $I=J+(F)$, where $J$ is Gorenstein and $F\notin J$ is a zero divisor on $R/J$, then
$$1\leq e(R/I)\leq e(R/J)-{\rm max}\{1, s(R/J)-{\rm deg}(F) +1\}.$$
If $I$ has maximal multiplicity, by Theorem \ref{Main} either $R/I$ is Cohen-Macaulay or is almost Cohen-Macaulay. When $e(R/I)=1$ a similar statement holds.

\begin{prop}\label{almostCM}
Let $J$ be a homogeneous Gorenstein ideal, and let $F\notin J$ be a homogeneous element such that ${\rm ht}\,I={\rm ht}\,J$, where $I=J+(F)$. Assume $e(R/I)=1$.\begin{itemize}
\item[(a)]$(${\rm Samuel} \cite{Sa}, \cite{Na}$)$ $I$ is unmixed if and only if $I$ is Cohen-Macaulay.
\item[(b)] $I$ is not unmixed if and only if ${\rm depth}(R/I)={\rm dim}(R/I)-1$ $($that is, $R/I$ is almost Cohen-Macaulay$)$. 
\end{itemize} 
\end{prop}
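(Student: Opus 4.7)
The plan is to handle the two parts separately, with part (a) being the classical Samuel--Nagata input and part (b) following by feeding that input into the liaison machinery already assembled (essentially mirroring the proof strategy of Theorem \ref{Main}(c)).

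For part (a), the direction $R/I$ Cohen-Macaulay $\Rightarrow$ $I$ unmixed is standard: Cohen-Macaulay factor rings have no embedded primes and are equidimensional. For the converse I will invoke the theorem of Samuel and Nagata: an unmixed local (or graded) ring of multiplicity one is regular, and in particular Cohen-Macaulay. Since $I=I^{un}$ by assumption and $e(R/I)=1$, this forces $R/I$ to be (graded-)regular, hence Cohen-Macaulay.

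For part (b), the backward implication is immediate: if $\mathrm{depth}(R/I)=\dim(R/I)-1$ then $R/I$ is not Cohen-Macaulay, so by part (a) the ideal $I$ cannot be unmixed. The substantive direction is the forward one, which I will handle via linkage exactly as in Theorem \ref{Main}(c). Assume $I\subsetneq I^{un}$. By the associativity formula $e(R/I^{un})=e(R/I)=1$, and $I^{un}$ is unmixed by definition, so part (a) gives that $R/I^{un}$ is Cohen-Macaulay. Setting $L=J:I$, the simple fact recorded after Proposition \ref{PS} gives $L=J:I^{un}$, and then Proposition \ref{PS} shows that $I^{un}\sim L$ via $J$. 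Proposition \ref{linkage}(a) transfers the Cohen-Macaulay property from $I^{un}$ to $L$, so $\mathrm{pd}(R/L)=\mathrm{ht}(L)=g$. Now Lemma \ref{ses} gives $\mathrm{pd}(R/I)\le g+1$, and the Auslander-Buchsbaum formula yields $\mathrm{depth}(R/I)\ge \dim(R/I)-1$. Combined with the fact that $R/I$ is not Cohen-Macaulay (by part (a), since $I$ is not unmixed), this forces the depth to equal $\dim(R/I)-1$.

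There is no real obstacle here beyond correctly lining up the hypotheses of the cited results: the only point that requires any care is verifying that the Samuel--Nagata conclusion applies in the graded polynomial setting (which it does, since we may pass to the localization at $\mathfrak m$ or directly use the graded version), and observing that the passage from $J:I$ to a link of $I^{un}$ is legitimate because $\mathrm{ht}(J)=\mathrm{ht}(I)=\mathrm{ht}(I^{un})$.
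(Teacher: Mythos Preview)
Your proof is correct and follows essentially the same approach as the paper's: both treat (a) as the classical Samuel--Nagata input and derive (b) by linking $I^{un}$ to $L=J:I$, transferring Cohen--Macaulayness across the link, applying Lemma~\ref{ses} to bound $\mathrm{pd}(R/I)$, and finishing with Auslander--Buchsbaum. The only cosmetic difference is that the paper writes the bound as $\mathrm{pd}(R/I)\le \mathrm{grade}\,I+1$ rather than $g+1$, which is the same since $R$ is a polynomial ring.
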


\begin{proof}
We only prove assertion (b). If ${\rm depth}(R/I)={\rm dim}(R/I)-1$, then $R/I$ is not Cohen-Macaulay, hence, by assertion (a), $I$ is not unmixed. Next, assume $I$ is not unmixed. Set $L=J:I$ and note that $L\sim I^{un}$, by Proposition \ref{PS}. Since $e(R/I^{un})=e(R/I)=1$, the ideal $I^{un}$ is Cohen-Macaulay, by assertion (a). By Proposition \ref{linkage} (a), $R/L$ is also Cohen-Macaulay. Then, Lemma \ref{ses} gives ${\rm pd}(R/I)\leq {\rm grade}\,I+1$. An application of the Auslander-Buchsbaum formula now proves ${\rm depth}(R/I)\geq {\rm dim}(R/I)-1$. Since $I$ is not unmixed, we have ${\rm depth}(R/I)\neq{\rm dim}(R/I)$, yielding ${\rm depth}(R/I)={\rm dim}(R/I)-1$.
\end{proof}

It is then natural to ask whether the analogue of Proposition \ref{almostCM} holds: assume $I$ has maximal multiplicity, is it true that $I$ is unmixed if and only if $I$ is Cohen-Macaulay? Corollary \ref{Unm} provides a positive answer.

\begin{cor}\label{Unm}
Let $J$ be a homogeneous Gorenstein ideal, and let $F\notin J$ be a homogeneous element such that ${\rm ht}\,I={\rm ht}\,J$, where $I=J+(F)$. Assume $I$ has maximal multiplicity.
\begin{itemize}
\item[(a)] $I$ is unmixed if and only if $I$ is Cohen-Macaulay if and only if ${\rm deg}(F)\leq s(R/J)$.
\item[(b)] $I$ is not unmixed if and only if ${\rm depth}(R/I)={\rm dim}(R/I)-1$ 
if and only if ${\rm deg}(F)>s(R/J)$. 
\end{itemize} 
\end{cor}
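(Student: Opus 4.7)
The plan is to combine Theorem \ref{Main} with a linkage argument that reduces the unmixedness question to the previously established case of multiplicity one (Proposition \ref{almostCM}). The key observation is that $R/J$ Gorenstein is in particular level, so Theorem \ref{Main}(b) already gives the equivalence ``$R/I$ Cohen-Macaulay $\Leftrightarrow {\rm deg}(F)\leq s(R/J)$,'' and Theorem \ref{Main}(c) gives ``$ {\rm deg}(F)>s(R/J) \Rightarrow {\rm depth}(R/I)={\rm dim}(R/I)-1$.'' Thus the only real content left is showing that, in the present hypotheses, unmixed forces Cohen-Macaulay.

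For part (a), the implications ``$R/I$ CM $\Rightarrow I$ unmixed'' and ``$R/I$ CM $\Leftrightarrow {\rm deg}(F)\leq s(R/J)$'' are immediate. The nontrivial direction is ``$I$ unmixed $\Rightarrow R/I$ CM.'' First I would assume $I$ is unmixed and link: set $L=J\colon I$. Since $J$ is Gorenstein with ${\rm ht}\,J={\rm ht}\,I$ and $I$ is unmixed, Proposition \ref{PS} shows $L\sim I$ via $J$, and in particular $L$ is unmixed. Applying Proposition \ref{linkage}(b) and the maximal multiplicity assumption yields
\[
e(R/L)=e(R/J)-e(R/I)={\rm max}\{1,s(R/J)-{\rm deg}(F)+1\}.
\]
Now I argue by contradiction: if ${\rm deg}(F)>s(R/J)$, then $e(R/L)=1$. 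Since $L$ is unmixed, Proposition \ref{almostCM}(a) (Samuel-Nagata) forces $R/L$ to be Cohen-Macaulay, and hence by Proposition \ref{linkage}(a) so is $R/I$. But then Theorem \ref{Main}(b) would give ${\rm deg}(F)\leq s(R/J)$, a contradiction. Therefore ${\rm deg}(F)\leq s(R/J)$, and Theorem \ref{Main}(a) yields that $R/I$ is Cohen-Macaulay.

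Part (b) is then a formal consequence of part (a) together with Theorem \ref{Main}(b)-(c). Indeed, $I$ is not unmixed iff (by (a)) $R/I$ is not Cohen-Macaulay iff (by Theorem \ref{Main}(b), recalling $R/J$ is level) ${\rm deg}(F)>s(R/J)$; and in this case Theorem \ref{Main}(c) gives ${\rm depth}(R/I)={\rm dim}(R/I)-1$. Conversely, ${\rm depth}(R/I)={\rm dim}(R/I)-1$ obviously implies $R/I$ is not Cohen-Macaulay, so the three conditions are equivalent.

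The main obstacle I anticipate is the ``unmixed $\Rightarrow$ CM'' step: one needs to recognize that the natural link $L=J\colon I$ has exactly the right multiplicity to trigger Proposition \ref{almostCM}(a), which is precisely why it was important to establish the multiplicity-one analogue first. Once that strategy is identified, the argument is almost automatic, provided one keeps careful track of the fact that links of unmixed ideals of the same height via a Gorenstein ideal are unmixed.
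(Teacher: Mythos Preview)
Your proof is correct, and the route for the key step (``$I$ unmixed $\Rightarrow$ $R/I$ Cohen--Macaulay'') differs from the paper's. You link $I$ to $L=J:I$, compute $e(R/L)=\max\{1,s(R/J)-\deg(F)+1\}$, and in the case $\deg(F)>s(R/J)$ appeal to Samuel's theorem (unmixed of multiplicity one is Cohen--Macaulay) to force $R/L$, hence $R/I$, to be Cohen--Macaulay, contradicting Theorem~\ref{Main}(b). The paper instead observes that $Q=J:I$ is quasi-Gorenstein (Proposition~\ref{linkQG}) with $e(R/Q)$ attaining the lower bound of Proposition~\ref{boundQ}, and then invokes Theorem~\ref{Gor} to conclude $Q$ is Gorenstein, whence $R/I$ is Cohen--Macaulay by linkage. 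Both arguments rest on Theorem~\ref{Main} and linkage; yours is more self-contained (it bypasses the quasi-Gorenstein machinery of Section~3), while the paper's route exhibits Corollary~\ref{Unm} as an application of Theorem~\ref{Gor}. Two minor remarks: your contradiction is unnecessary---once Samuel gives $R/L$ Cohen--Macaulay, linkage already gives $R/I$ Cohen--Macaulay directly, which is what you want; and your citation of Proposition~\ref{almostCM}(a) for $L$ really uses Samuel's theorem in its general form, since $L$ is not literally presented as $J'+(F')$, but this is harmless.
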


\begin{proof} 
 (a) By Theorem \ref{Main} (c) we only need to prove that, if $I$ is unmixed, then $I$ is Cohen-Macaulay. If $I$ is unmixed, the ideal $Q=J:I$ is quasi-Gorenstein by Proposition \ref{linkQG}. As in the proof of Proposition \ref{boundQ}, the maximal multiplicity of $I$ yields $e(R/Q)= {\rm max}\left\{1, a(R/Q)+{\rm dim}(R/Q)+1\right\}$. Now, Theorem \ref{Gor} implies that $Q$ is Gorenstein and then, by Proposition \ref{linkage} (a), $I$ is Cohen-Macaulay.

Assertion (b) follows from assertion (a) and Theorem \ref{Main} (a) and (c). 
\end{proof}

We conclude with  a question. If $I$ is an ideal of maximal multiplicity, in general there could be several different maximal decompositions $I=J+(F)$. 
Moreover, if $F$ and $F'$ are minimal generators of $I$, there could be a maximal decomposition of $I$ of the form $I=J+(F)$, but no maximal decomposition for $I$ of the form $I=J'+(F')$. These observations raise the following question: {\it Is there an implicit characterization of ideals $I$ of maximal multiplicity?} Theorem \ref{char} gives a sufficient condition when $I$ is Cohen-Macaulay, and Proposition \ref{ci2} gives a necessary condition, but we don't know a necessary and sufficient condition..

\section{Acknowledgements}
Part of these results were proved while all the authors were members of the MSRI program on Commutative Algebra. We warmly thank the MSRI for its support and the great mathematical environment. We are also grateful to G. Caviglia, L. Fouli, M. Kummini and B. Ulrich for helpful conversations.


\end{document}